\newtheorem{theorem}{Theorem}[section]
\newtheorem{lemma}[theorem]{Lemma}
\newtheorem{proposition}[theorem]{Proposition}
\theoremstyle{definition}
\newtheorem{definition}[theorem]{Definition}
\newtheorem{remark}{Remark}
\newtheorem{example}{Example}
\numberwithin{equation}{section}
\DeclareMathOperator{\vol}{vol}
\DeclareMathOperator{\Pf}{Pf}\DeclareMathOperator{\V}{V}
\author{Wei Zhao}
\address{
Department of Mathematics\\
East China University of Science and Technology\\
Shanghai, China}
\email{szhao\underline{ }wei@yahoo.com}
\keywords{Gauss-Bonnet-Chern formula, Finsler manifold, Finsler bundle, metric-compatible connection}
\subjclass[2010]{Primary 53B40, Secondary 53C05}
\begin{document}

\title[]{A Gauss-Bonnet-Chern theorem for Finsler vector bundles}

\begin{abstract}In this paper, we give a simple proof of the Gauss-Bonnet-Chern theorem for a real oriented Finsler vector bundle with rank equal to the dimension of the base manifold. As an application, a Gauss-Bonnet-Chern formula for any metric-compatible connection is established on Finsler manifolds.
\end{abstract}
\maketitle

\section{Introduction} 

Fifty years ago,
S. S. Chern \cite{Ch1,Ch2} gave an intrinsic proof of the Gauss-Bonnet-Chern (GBC) theorem for all oriented closed $n$-dimensional Riemannian manifolds $(M,g)$, that is,
\[
\int_M\mathbf{\Omega}=\chi(M),\tag{1.1}\label{1.11}
\]
where
\begin{align*}
\mathbf{\Omega}&=\left\{
\begin{array}{lll}
&\frac{(-1)^{p}}{2^{2p}\pi^pp!}\epsilon_{i_1\ldots i_{2p}}\Omega_{i_1}^{i_2}\wedge\cdots\wedge \Omega_{i_{2p-1}}^{i_{2p}},&n=2p,\tag{1.2}\label{1.2}\\
\\
&0,&n=2p+1.
\end{array}
\right.
\end{align*}
and $(\Omega^i_j)$ is the local curvature form of the Levi-Civita connection.
The GBC theorem plays an important role in differential geometry, leading to the development in
several areas such as the theory of characteristic classes and
index theory \cite{BGV,MQ}. Recently, the GBC theorem has been generalized to Riemannian bundles \cite{B,N}, which reveals an intrinsically
beautiful fact: the integral of the geometric Euler class is exactly the Euler characteristic for any oriented Riemannian vector bundle with rank equal to the dimension of the base manifold (cf. \cite[Theorem 1]{B}). The rank requirement here is natural and necessary, which makes it possible to integrate the Euler class over the underlying manifold.

Finsler geometry is just Riemannian geometry without quadratic restriction. It is natural to ask whether an analogue of (1.1) still holds for Finsler manifolds. The purpose of this paper is to study this problem.
Let $(M,F)$ be an $n$-dimensional Finsler manifold. Denote by $\pi:SM\rightarrow M$ the projective sphere bundle and $\pi^*TM$ the pull-back tangent bundle. $F$ induces naturally  a Riemannian metric on $\pi^*TM$.
There are various connections of $F$, but none of them is both "torsion-free" and "metric-compatible". For example, the Cartan connection is metric-compatible but not torsion-free  while the Chern connection is torsion-free but not metric-compatible. Refer to \cite{AIM,BCS} for other interesting connections.

In the 1950s,
Lichnerowicz \cite{Li} and Busemann \cite{Bu} made some efforts to generalize (\ref{1.11}) to Finsler manifolds. Their work show that there is no simple formula as (\ref{1.11}) valid for general Finsler manifolds.
Fifty years later, Bao-Chern \cite{BC} reconsidered this problem and established the GBC theorem for Finselr manifolds with $\V(x)=\text{constant}$ by the Chern connection, which is also derived by Shen \cite{Sh1} from the Cartan connection in the same year. Here, $\V(x)$ is the Riemannian volume of $S_xM$ induced by $F$ (cf. \cite{BS}). Five years later, Lackey \cite{L} used a nice trick to deal with $\V(x)$ and generalized the result of Bao-Chern \cite{BC} to general Finsler manifolds. In fact, Lackey established a GBC theorem for any torsion-free connection.

All the methods used in \cite{BC,L,Li,Sh1} are inspired by Chern's original idea, that is, using two polynomials $\Phi_k$, $\Psi_k$ to obtain the transgression
\[
\mathbf{\Omega}^D+\mathfrak{F}=d\Pi,
\]
where $D$ is a connection of $F$, $\mathfrak{F}$ is a form on $SM$, $\mathbf{\Omega}^D$ is defined as (\ref{1.2}) by $D$, and $\Pi$, $\Phi_k$ and $\Psi_k$ are defined as in \cite{Ch1,Ch2}.
It should be noticed that $\Pi$ induced by a torsion-free connection has a close relationship with $\V(x)$, i.e.,
\[
\Pi|_{S_xM}=\frac{d\nu_x}{\vol(\mathbb{S}^{n-1})},\tag{1.3}\label{1.4}
\]
where $d\nu_x$ is the volume form of $S_xM$, that is, $\V(x)=\int_{S_xM}d\nu_x$.
(\ref{1.4})  allows us to use the Poincar\'e-Hopf theorem and establish the GBC theorem \cite{BC,L} (for any torsion-free connection)
\begin{align*}
\int_{M}[X]^*\left(\frac{\mathbf{\Omega}^D+\mathfrak{F}}{\V(x)}\right)=
\frac{\chi(M)}{\vol(\mathbb{S}^{n-1})},
\end{align*}
where $[X]:M\rightarrow SM$ is an arbitrary section with isolated singularities. The additional item $\mathfrak{F}$ does not vanish simply because $D$ is not metric-compatible.
However, in the metric-compatible case, (\ref{1.4}) is no longer true, although $\mathfrak{F}$ always vanishes. In fact, it seems that the metric-compatible structure equation gives little information about $\Pi$.

The purpose of this paper is to give a short proof of the GBC theorem for any metric-compatible connection and for any oriented Finsler bundle with rank equal to the dimension of the base manifold.
In order to introduce our main results, we shall introduce some notions and basic facts of Finsler bundles. See Section 2 below for more details.

Given a rank $n$ oriented Finsler bundle $(\mathscr{E},F)$
over a $n$-dimensional
closed oriented manifold $M$. Let $\pi:S\mathscr{E}\rightarrow M$ be the the projective sphere bundle and let $\pi^*\mathscr{E}$ be the pull-back bundle. $F$ induces an Riemannian metric on $\pi^*\mathscr{E}$. For each metric-compatible connection $D$ on $\pi^*\mathscr{E}$,
define the Pfaffian $\mathbf{\Omega}^D$ as (\ref{1.2}).
Then we have the following:
\begin{theorem}\label{Th1}
Let $\mathscr{E}$ be an oriented Finsler vector bundle of rank $n$ over an $n$-dimensional
closed oriented manifold $M$. Given any metric-compatible connection $D$, for any smooth section $X$ with isolated zeros on $\mathscr{E}$, we have
\begin{align*}
\int_{M}[X]^*\left(\frac{\mathbf{\Omega}^D+\mathfrak{E}}{\V(x)}\right)=
\frac{\chi(\mathscr{E})}{\vol(\mathbb{S}^{n-1})},
\end{align*}
where $[X]$ is the section of $S\mathscr{E}$ induced by $X$, $\V(x)$ is the Riemannian volume of $\pi^{-1}(x)$, $\chi(\mathscr{E})$ is the Euler characteristic of $\mathscr{E}$ and $\mathfrak{E}$ is an $n$-form on $S\mathscr{E}$.
\end{theorem}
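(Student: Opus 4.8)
The plan is to avoid the classical transgression machinery entirely --- Chern's polynomials $\Phi_{k},\Psi_{k}$ and the structure equations, which (as the Introduction notes) give little usable information about $\Pi$ in the metric-compatible setting --- and instead to \emph{define} the correction term $\mathfrak{E}$ directly, so that $(\mathbf{\Omega}^{D}+\mathfrak{E})/\V(x)$ becomes an \emph{exact} $n$-form on $S\mathscr{E}$ whose primitive integrates to the constant $1/\vol(\mathbb{S}^{n-1})$ over every fiber of $\pi$. Once this is arranged, Theorem~\ref{Th1} follows from Stokes' formula applied to $[X]$ on $M$ with small balls removed around the zeros of $X$, together with the Poincar\'e--Hopf theorem for vector bundles.

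I would first isolate the only property of $\mathbf{\Omega}^{D}$ that is used: since $D$ is metric-compatible and $\mathscr{E}$ is oriented, the structure group of $\pi^{*}\mathscr{E}\to S\mathscr{E}$ reduces to $SO(n)$, so the Pfaffian expression (\ref{1.2}) is independent of the chosen oriented $g$-orthonormal frame and $\mathbf{\Omega}^{D}$ is a well-defined smooth $n$-form on $S\mathscr{E}$; this is exactly the point at which metric-compatibility, rather than torsion-freeness, enters. Next I would fix a horizontal complement $\mathcal{H}$ to $\ker d\pi$ in $TS\mathscr{E}$ (for instance the one coming from any linear connection on $\mathscr{E}\to M$) and let $\Pi$ be the $(n-1)$-form on $S\mathscr{E}$ that annihilates $\mathcal{H}$ and restricts on each fiber $\pi^{-1}(x)$ to $\frac{1}{\vol(\mathbb{S}^{n-1})}\cdot\frac{d\nu_{x}}{\V(x)}$, that is, to $1/\vol(\mathbb{S}^{n-1})$ times the normalized Riemannian volume form of $\pi^{-1}(x)$. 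Then $\Pi$ is a genuine smooth global form and $\int_{\pi^{-1}(x)}\Pi=1/\vol(\mathbb{S}^{n-1})$ for every $x\in M$, and I would set
\[
\mathfrak{E}:=-\,\V(x)\,d\Pi-\mathbf{\Omega}^{D},
\]
an $n$-form on $S\mathscr{E}$ satisfying, by construction, $(\mathbf{\Omega}^{D}+\mathfrak{E})/\V(x)=-d\Pi$ identically on $S\mathscr{E}$. (The overall sign in the definition of $\mathfrak{E}$ is fixed by the orientation conventions in (\ref{1.2}) and of the fibers so that the final identity carries a $+$ sign.)

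For the Poincar\'e--Hopf step, let $p_{1},\dots,p_{k}$ be the zeros of $X$ and $B_{\varepsilon}(p_{i})$ pairwise disjoint coordinate balls, so that $[X]=X/F(X)$ is a smooth section $M\setminus\bigcup_{i}B_{\varepsilon}(p_{i})\to S\mathscr{E}$ and $[X]^{*}\!\left(\frac{\mathbf{\Omega}^{D}+\mathfrak{E}}{\V(x)}\right)=-\,d\bigl([X]^{*}\Pi\bigr)$ there. Stokes' formula then yields
\[
\int_{M\setminus\bigcup_{i}B_{\varepsilon}(p_{i})}[X]^{*}\!\left(\frac{\mathbf{\Omega}^{D}+\mathfrak{E}}{\V(x)}\right)=\sum_{i=1}^{k}\int_{\partial B_{\varepsilon}(p_{i})}[X]^{*}\Pi ,
\]
and as $\varepsilon\to 0$ the left-hand side tends to $\int_{M}[X]^{*}\!\left(\frac{\mathbf{\Omega}^{D}+\mathfrak{E}}{\V(x)}\right)$, while for each $i$, after retracting a tubular neighbourhood of the fiber $\pi^{-1}(p_{i})$ onto $\pi^{-1}(p_{i})\cong\mathbb{S}^{n-1}$, the map $[X]|_{\partial B_{\varepsilon}(p_{i})}$ has degree $\mathrm{ind}_{p_{i}}(X)$, whence $\int_{\partial B_{\varepsilon}(p_{i})}[X]^{*}\Pi\to \mathrm{ind}_{p_{i}}(X)/\vol(\mathbb{S}^{n-1})$. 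Consequently $\int_{M}[X]^{*}\!\left(\frac{\mathbf{\Omega}^{D}+\mathfrak{E}}{\V(x)}\right)=\frac{1}{\vol(\mathbb{S}^{n-1})}\sum_{i=1}^{k}\mathrm{ind}_{p_{i}}(X)=\frac{\chi(\mathscr{E})}{\vol(\mathbb{S}^{n-1})}$, the last equality being the Poincar\'e--Hopf theorem for vector bundles, which applies precisely because $\operatorname{rank}\mathscr{E}=\dim M$.

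The step demanding the most care is the local behaviour of $[X]$ at its zeros: one must verify that the improper integral $\int_{M}[X]^{*}\!\left(\frac{\mathbf{\Omega}^{D}+\mathfrak{E}}{\V(x)}\right)$ converges --- equivalently, that the pull-back $[X]^{*}d\Pi$ is integrable near each $p_{i}$, which holds because the fibers of $\pi$ have dimension $n-1$, so an $n$-form on $S\mathscr{E}$ involves at most $n-1$ vertical directions --- and that $\lim_{\varepsilon\to0}\int_{\partial B_{\varepsilon}(p_{i})}[X]^{*}\Pi=\mathrm{ind}_{p_{i}}(X)/\vol(\mathbb{S}^{n-1})$ even when $p_{i}$ is a degenerate isolated zero. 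For the latter one uses that $\Pi$ minus the pull-back of its restriction to $\pi^{-1}(p_{i})$ vanishes along $\pi^{-1}(p_{i})$, hence contributes $o(1)$ to the small-sphere integrals, or else one first establishes the identity for sections with only nondegenerate zeros and then invokes the fact that both sides are independent of $X$. Everything else is routine; in particular no structure equation of $D$ is used, and when $F$ comes from a Riemannian metric one checks directly that $\int_{M}[X]^{*}\mathfrak{E}=0$, so Theorem~\ref{Th1} specializes to the bundle Gauss--Bonnet theorem of \cite{B}.
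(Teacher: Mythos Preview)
Your argument is formally valid for the theorem as literally stated, but it trivializes the result: with your choice $\mathfrak{E}=-\V(x)\,d\Pi-\mathbf{\Omega}^{D}$, the summand $-\mathbf{\Omega}^{D}$ cancels the Pfaffian outright, so $\mathbf{\Omega}^{D}+\mathfrak{E}=-\V(x)\,d\Pi$ involves no curvature whatsoever and the identity you integrate is nothing but Poincar\'e--Hopf in disguise. The paper, by contrast, produces a \emph{specific} $\mathfrak{E}$ (displayed just after (\ref{3.4})) built from the Chern--Weil transgression $\Upsilon_{0}$ between $D$ and its modified connection $\nabla$ and from the curvature polynomials $\Phi_{k}$ of $\nabla$; the real work is Proposition~\ref{sp}, which proves that the top piece $\Upsilon_{1}$ of the transgression form for $\nabla$ restricts on each fiber to the normalized volume form --- i.e.\ that the analogue of (\ref{1.4}) \emph{does} hold for the modified connection of any metric-compatible $D$. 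That is precisely the substitute for the torsion-free relation (\ref{1.4}) which the Introduction says is unavailable in the metric-compatible case. Your $\Pi$, defined \emph{ab initio} to have the correct fiber integral, bypasses this step and with it the content of the theorem.

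A concrete symptom is your final claim that in the Riemannian case ``one checks directly that $\int_{M}[X]^{*}\mathfrak{E}=0$'': with your definition this equality is \emph{equivalent} to $\int_{M}\mathbf{\Omega}^{\mathcal{D}}=\chi(\mathscr{E})$, which is Theorem~\ref{Rvb} itself, so the specialization is circular. The paper's $\mathfrak{E}$, on the other hand, reduces in the Riemannian case to $-d\Upsilon_{2}$ (since then $\nabla=\pi^{*}\mathcal{D}$, so $\Upsilon_{0}=0$, and $\V(x)$ is constant), and $\int_{M}[X]^{*}d\Upsilon_{2}=0$ follows because $\Upsilon_{2}$ has no pure-$dy$ part. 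Your Stokes/degree endgame matches Lemma~\ref{inter} and the proof in Section~4, but the heart of the matter --- constructing $\mathfrak{E}$ from curvature data via the modified connection so that $\mathbf{\Omega}^{D}$ genuinely survives in the formula --- is missing.
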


It should be remarked that all the $({\mathbf{\Omega}^D+\mathfrak{E}})/{\V(x)}$s of metric-compatible connections are in the same DeRham cohomology
class, which can be viewed as the modified geometric Euler class of $\pi^*\mathscr{E}$. 
The correction term
$\mathfrak{E}$ is an exact form if $\V(x)$ is constant. However, even in this case, $\int_M [X]^*({\mathfrak{E}}/{\V(x)})$ does not vanish (e.g., \cite{Sh1}).
See Section 3 below for the precise formula of $\mathfrak{E}$. Theorem \ref{Th1} reduces to the GBC theorem for Riemannian bundles \cite[Theorem 1]{B}.
\begin{theorem}[\cite{B}]\label{Rvb}
Let $\mathscr{F}$ be a rank $n$ oriented Riemannian vector bundle over an $n$-dimensional
closed oriented manifold $M$. Given any metric-compatible connection $\mathcal {D}$, we have
\[
\int_M \mathbf{\Omega}^\mathcal {D} =\chi(\mathscr{\mathscr{F}}).
\]
\end{theorem}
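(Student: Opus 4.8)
The plan is to deduce Theorem \ref{Rvb} from Theorem \ref{Th1} by viewing a Riemannian vector bundle as a special Finsler bundle whose fibrewise norm is quadratic. Let $g$ denote the bundle metric of $\mathscr{F}$ and set $F(v)=\sqrt{g(v,v)}$ for $v\in\mathscr{F}$; this is a Finsler structure on $\mathscr{F}$ in the sense of Section 2, and the Riemannian metric it induces on $\pi^*\mathscr{F}$ is simply the pull-back $\pi^*g$, because the fundamental tensor $g_{ij}(x,y)=\tfrac12\partial_{y^i}\partial_{y^j}F^2$ no longer depends on the fibre coordinate $y$. In particular the Cartan tensor of $F$ vanishes identically, which will be the crucial feature.

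Next I would fix the connection. Given a metric connection $\mathcal{D}$ on $(\mathscr{F},g)$, take $D:=\pi^*\mathcal{D}$ on $\pi^*\mathscr{F}$. Since the induced metric is $\pi^*g$, this pull-back connection is metric-compatible, so Theorem \ref{Th1} applies to it: for any smooth section $X$ of $\mathscr{F}$ with isolated zeros,
\begin{align*}
\int_{M}[X]^*\left(\frac{\mathbf{\Omega}^D+\mathfrak{E}}{\V(x)}\right)=\frac{\chi(\mathscr{F})}{\vol(\mathbb{S}^{n-1})}.
\end{align*}
Now I would simplify the three ingredients on the left. (i) As $F$ restricts to a Euclidean norm on each fibre, $\pi^{-1}(x)$ is an ordinary unit $(n-1)$-sphere with its round metric, hence $\V(x)=\vol(\mathbb{S}^{n-1})$ is constant. (ii) The curvature of $D=\pi^*\mathcal{D}$ is $\Omega^D=\pi^*\Omega^{\mathcal{D}}$, so the Pfaffian satisfies $\mathbf{\Omega}^D=\pi^*\mathbf{\Omega}^{\mathcal{D}}$; since $\pi\circ[X]=\id_M$, we get $[X]^*\mathbf{\Omega}^D=\mathbf{\Omega}^{\mathcal{D}}$ on the nose, so in particular this term no longer depends on the choice of $X$. (iii) The correction form $\mathfrak{E}$, whose explicit expression is recorded in Section 3, is built out of the Cartan tensor of $F$; since that tensor vanishes, $\mathfrak{E}=0$. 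Substituting (i)--(iii) into the displayed identity and cancelling the common factor $\vol(\mathbb{S}^{n-1})$ yields $\int_M\mathbf{\Omega}^{\mathcal{D}}=\chi(\mathscr{F})$, which is the assertion.

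The only genuinely delicate point is step (iii): one must check, against the precise formula for $\mathfrak{E}$ derived in Section 3, that every term of $\mathfrak{E}$ carries at least one factor of the Cartan tensor (equivalently, of the difference between the metric-compatible $D$ and a Chern-type connection), so that $\mathfrak{E}$ collapses to zero in the quadratic case; once this is verified the remainder is bookkeeping. If instead one only wishes to invoke the weaker fact that $\mathfrak{E}$ is exact when $\V(x)$ is constant, then one still has to rule out a nonvanishing period $\int_M[X]^*(\mathfrak{E}/\V(x))$, and it is precisely the vanishing of the Cartan tensor that forces this period to be zero in the Riemannian setting.
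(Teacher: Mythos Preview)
Your overall strategy coincides with the paper's: apply Theorem~\ref{Th1} to $D=\pi^*\mathcal D$, use $\V(x)\equiv\vol(\mathbb S^{n-1})$, and use $[X]^*\mathbf\Omega^{D}=\mathbf\Omega^{\mathcal D}$. Steps (i) and (ii) are fine and match the paper verbatim.

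The gap is step (iii). Your claim that ``every term of $\mathfrak E$ carries at least one factor of the Cartan tensor'' is not correct. From the explicit formula in Section~3,
\[
\mathfrak E=-d\Upsilon_0-d\log\V(x)\wedge\Upsilon_1-d\Upsilon_2,
\]
and in the Riemannian case one indeed gets $\Upsilon_0=0$ (because the modified connection of $\pi^*\mathcal D$ is $\pi^*\mathcal D$ itself, so $\partial D_s/\partial s=0$) and $d\log\V=0$. But the remaining piece $-d\Upsilon_2$ is built from the $\Phi_k$ with $k\ge1$, which involve the curvature $\Omega$ and the forms $\varpi^n_\alpha$, \emph{not} the Cartan tensor; there is no reason for $d\Upsilon_2$ to vanish as a form on $S\mathscr F$, and in general it does not. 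So $\mathfrak E\ne0$.

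What the paper actually argues is that $\int_M[X]^*\mathfrak E=0$, not that $\mathfrak E=0$. The mechanism is this: since $\nabla=\pi^*\mathcal D$, the curvature $\Omega$ is the pull-back of a $2$-form on $M$, hence $i_x^*\Omega=0$ for every fibre inclusion $i_x:S_x\mathscr F\hookrightarrow S\mathscr F$. Each $\Phi_k$ with $k\ge1$ contains at least one curvature factor, so $i_x^*\Upsilon_2=0$; this is what the paper phrases as ``$-d\Upsilon_2$ has no pure-$dy$ part''. Applying Stokes to $[X]^*d\Upsilon_2$ on $M\setminus\bigcup B^+_{x_{\mathfrak a}}(\epsilon)$ and letting $\epsilon\to0$ as in Lemma~\ref{inter}, the boundary contributions are governed by $i_{x_{\mathfrak a}}^*\Upsilon_2=0$, so the integral of $[X]^*\mathfrak E$ vanishes. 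Your fallback sentence points in this direction, but the operative fact is not the Cartan tensor per se; it is that the curvature of $\nabla$ is horizontal (pulled back from $M$), which is what makes $\Upsilon_2$ vanish along fibres.
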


Recall that $\chi(M)$ is exactly $\chi(TM)$. Then we derive the following GBC theorem for Finsler manifolds from Theorem \ref{Th1} directly.
\begin{theorem}\label{Th2}
Let $(M,F)$ be an $n$-dimensional closed oriented Finsler manifold. Given any metric-compatible connection $D$, for any vector field $X$ with isolated zeros, we have
\begin{align*}
\int_{M}[X]^*\left(\frac{\mathbf{\Omega}^D+\mathfrak{E}}{\V(x)}\right)=
\frac{\chi(M)}{\vol(\mathbb{S}^{n-1})},
\end{align*}
where $[X]$ is the section of $SM$ induced by $X$, $\V(x)$ is the Riemannian volume of $S_xM$, and $\mathfrak{E}$ is an $n$-form on $SM$.
\end{theorem}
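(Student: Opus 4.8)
The plan is to obtain Theorem \ref{Th2} as the special case $\mathscr{E}=TM$ of Theorem \ref{Th1}. First I would record that an $n$-dimensional Finsler manifold $(M,F)$ is exactly an oriented rank-$n$ Finsler vector bundle over the $n$-manifold $M$: the projection is $\pi\colon TM\to M$, the Minkowski norm $F(x,\cdot)$ on each fibre $T_xM$ is a fibrewise Finsler structure, the associated projective sphere bundle $S\mathscr{E}$ is the usual $SM$, the pull-back $\pi^*\mathscr{E}$ is $\pi^*TM$, and the Riemannian metric that $F$ induces on $\pi^*\mathscr{E}$ (via the Hessian of $F^2/2$) is the fundamental tensor. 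The bundle $TM$ is oriented precisely because $M$ is, and it has rank $n=\dim M$, so the hypotheses of Theorem \ref{Th1} are met; moreover any metric-compatible connection $D$ on $\pi^*TM$ in the sense of Section 2 (e.g.\ the Cartan connection) is admissible there, with the very same Pfaffian $\mathbf{\Omega}^D$ and correction $n$-form $\mathfrak{E}$ on $SM$ as constructed in Section 3.

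Next I would note that a closed manifold carries smooth vector fields with only isolated zeros — for instance the gradient-type field of a Morse function, or a generic section of $TM$ — so the hypothesis of Theorem \ref{Th1} with $X$ a section of $\mathscr{E}=TM$ having isolated zeros is non-vacuous, and the induced section $[X]\colon M\to SM$, defined away from the zero set of $X$ by $x\mapsto X(x)/F(X(x))$, is precisely the section appearing in Theorem \ref{Th1}. Applying Theorem \ref{Th1} to $(\mathscr{E},F)=(TM,F)$ then gives
\[
\int_{M}[X]^*\!\left(\frac{\mathbf{\Omega}^D+\mathfrak{E}}{\V(x)}\right)=\frac{\chi(TM)}{\vol(\mathbb{S}^{n-1})}.
\]
Finally, the classical Poincar\'e--Hopf theorem identifies $\chi(TM)$ — the Euler characteristic of the bundle $TM$, equivalently the sum of the indices of a section with isolated zeros — with $\chi(M)$, which yields the asserted formula.

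There is essentially no analytic obstacle at this stage: the transgression argument on $S\mathscr{E}$, the treatment of the non-constant fibre volume $\V(x)$, and the identification of the integral with $\chi(\mathscr{E})$ have all been carried out in the proof of Theorem \ref{Th1}, and they apply unchanged to $\mathscr{E}=TM$. The only point deserving a line of care is compatibility of orientations — one uses the orientation of $TM$ induced by that of $M$, which is exactly the normalisation for which $\chi(TM)=\chi(M)$ rather than $(-1)^n\chi(M)$ — together with the observation that for $n$ odd both sides of the identity vanish, consistently with $\chi(M)=0$ and the second clause of (\ref{1.2}).
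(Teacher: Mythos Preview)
Your proposal is correct and follows essentially the same approach as the paper: the paper states that Theorem \ref{Th2} is derived from Theorem \ref{Th1} directly by noting that $\chi(M)=\chi(TM)$, which is exactly what you do. Your additional remarks on orientations, the existence of sections with isolated zeros, and the odd-dimensional case are reasonable elaborations but add nothing beyond what the paper takes for granted.
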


Theorem \ref{Th2} implies the classical GBC theorem in
the special case when $M$ is a Riemannian manifold endowed with any metric-compatible connection \cite{BGV,MQ}.
\begin{theorem}[\cite{BGV,MQ}]\label{cr}
Let $(M,g)$ be an closed oriented Riemannian manifold. Given any metric-compatible connection $\mathcal {D}$, we have
\[
\int_M \mathbf{\Omega}^\mathcal {D} =\chi(M).
\]
\end{theorem}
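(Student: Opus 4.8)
The plan is to read off Theorem~\ref{cr} from Theorem~\ref{Th2} by letting the Finsler structure degenerate to a Riemannian one. View $(M,g)$ as the Finsler manifold $(M,F)$ with $F=\sqrt{g}$. Then the indicatrix $S_xM$ is the unit sphere of $(T_xM,g_x)$, isometric to the round $\mathbb{S}^{n-1}$, so the fibre volume function is the constant $\V(x)\equiv\vol(\mathbb{S}^{n-1})$; moreover the Riemannian metric that $F$ induces on $\pi^*TM$ is simply $\pi^*g$. Hence a metric-compatible connection $\mathcal{D}$ on $(M,g)$ pulls back to the connection $D:=\pi^*\mathcal{D}$ on $\pi^*TM$, which is metric-compatible for $\pi^*g$, so Theorem~\ref{Th2} applies to $D$. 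Since $\V(x)$ is constant we may clear it from the denominator, and Theorem~\ref{Th2} becomes
\[
\int_{M}[X]^*\bigl(\mathbf{\Omega}^{D}+\mathfrak{E}\bigr)=\chi(M)
\]
for any vector field $X$ with isolated zeros.

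Next I would evaluate the two pieces on the left. Working in a local $g$-orthonormal frame of $TM$ pulled back to $SM$, the connection matrix of $D$ is the pull-back of that of $\mathcal{D}$, so the curvature forms $\Omega^i_j$ of $D$ are $\pi^*$ of those of $\mathcal{D}$; being a fixed polynomial in the $\Omega^i_j$, the Pfaffian therefore satisfies $\mathbf{\Omega}^{D}=\pi^*\mathbf{\Omega}^{\mathcal{D}}$. Consequently, wherever $[X]$ is defined,
\[
[X]^*\mathbf{\Omega}^{D}=(\pi\circ[X])^*\mathbf{\Omega}^{\mathcal{D}}=\mathbf{\Omega}^{\mathcal{D}},
\]
and since $\mathbf{\Omega}^{\mathcal{D}}$ is a globally defined $n$-form on $M$ while $X$ has only finitely many zeros, $\int_M[X]^*\mathbf{\Omega}^{D}=\int_M\mathbf{\Omega}^{\mathcal{D}}$.

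There remains the correction term, and here is the single place where one must look at the explicit description of $\mathfrak{E}$ from Section~3: $\mathfrak{E}$ is built only from the fibre-direction part of the connection and curvature of $D$ and from the non-Riemannian (Cartan-type) quantities of $F$, all of which are identically zero when $F$ is Riemannian and $D$ is pulled back from the base. Thus $\mathfrak{E}\equiv 0$ in this situation, so $[X]^*\mathfrak{E}=0$, and the displayed identity collapses to $\int_M\mathbf{\Omega}^{\mathcal{D}}=\chi(M)$, which is the assertion.

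I expect the main obstacle to be precisely that last verification: confirming, from the formula for $\mathfrak{E}$ established in Section~3, that $\mathfrak{E}$ genuinely vanishes once the Finsler structure is Riemannian and the connection is a pull-back — note that the general constant-$\V(x)$ case does \emph{not} force this. The rest of the argument is purely formal bookkeeping with pull-backs and the constancy of $\V(x)$.
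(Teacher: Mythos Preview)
Your overall strategy matches the paper's: specialize Theorem~\ref{Th2} to the Riemannian case, use the pull-back connection $D=\pi^*\mathcal{D}$, exploit $\V(x)\equiv\vol(\mathbb{S}^{n-1})$, and identify $\mathbf{\Omega}^{D}=\pi^*\mathbf{\Omega}^{\mathcal{D}}$. The gap is exactly where you feared it would be, and it is a real one.

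The assertion $\mathfrak{E}\equiv 0$ is false in general (already for $n\ge 4$). From Section~3,
\[
\mathfrak{E}=-d\Upsilon_0-d\log\V(x)\wedge\Upsilon_1-d\Upsilon_2.
\]
In the Riemannian pull-back situation the first two summands do vanish: since the Cartan tensor $A\equiv 0$, the modified connection of $\pi^*\mathcal{D}$ is $\pi^*\mathcal{D}$ itself, so $\Upsilon_0=0$; and $\V$ is constant. But $\Upsilon_2$ is a combination of the $\Phi_k$ with $k\ge 1$, built from the curvature forms $\Omega^{\alpha}_{\beta}$ of $\nabla$ together with the forms $\varpi^n_{\alpha}$. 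Neither ingredient is a ``Cartan-type'' or purely vertical quantity: the curvature is nonzero whenever $\mathcal{D}$ has curvature, and $\varpi^n_{\alpha}$ comes from differentiating the tautological section $\ell$, which depends on the fibre variable even when $F$ is Riemannian. Hence $\mathfrak{E}=-d\Upsilon_2$ is generically a nonzero $n$-form on $SM$; your description of $\mathfrak{E}$ as built only from fibre-direction and non-Riemannian data does not match the formula.

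What the paper proves is the weaker (and sufficient) statement $\int_M[X]^*\mathfrak{E}=0$. The point is that in the Riemannian pull-back case each $\Omega^{\alpha}_{\beta}$ is a purely horizontal $2$-form (pulled back from $M$), so every $\Phi_k$ with $k\ge 1$ contains at least one horizontal factor; hence $\Upsilon_2$ has no pure-$\delta y$ component and vanishes on each fibre $S_xM$. Since $\mathfrak{E}=-d\Upsilon_2$ is exact, Stokes on $M\setminus\bigcup_{\mathfrak{a}}B^+_{x_\mathfrak{a}}(\epsilon)$ reduces $\int_M[X]^*\mathfrak{E}$ to the boundary integrals $\int_{\partial B^+_{x_\mathfrak{a}}(\epsilon)}[X]^*\Upsilon_2$, and these tend to $0$ as $\epsilon\to 0$ precisely because of that horizontality. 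This is the content of the paper's phrase ``$\mathfrak{E}=-d\Upsilon_2$ has no pure-$dy$ part and (therefore) $\int_M[X]^*\mathfrak{E}=0$.'' Once this is in place, your remaining bookkeeping gives the result.
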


For an oriented Riemannian bundle $\mathscr{F}$, the cohomology class $[\mathbf{\Omega}^\mathcal {D}]$ is the geometric Euler class of $\mathscr{F}$.
Bell obtains Theorem \ref{Rvb} above by revealing an important fact, i.e.,
 the geometric Euler class always coincides with the topological Euler class for all oriented Riemannian bundles \cite[Theorem 4]{B}. However, it seems impossible to extend this result to general Finsler bundles. See Remark \ref{last} below for more details.
 We know that Mathai-Quillen \cite{MQ} give a proof of the Theorem \ref{cr} by showing that the pullback of the Thom class via the zero section is the Euler class of the base manifold. But their method cannot be applied to the Finsler setting either (see Remark \ref{mqm} below).
The key idea in the proof of
Theorem \ref{Th1} is to modify a given metric-compatible connection to another new metric-compatible connection with certain special properties.

It should be noticed that
 Shen \cite{Sh2} has also established a GBC theorem for any metric-compatible connection on Finsler manifolds. But his formula and method are different from ours here. Refer to \cite{Sh2} for more details.

\section{Preliminaries}
In this paper,
the rules that govern our index gymnastics are as follows: low case Latin indices run from $1$ to $n$; capital Latin indices run from $1$ to $m$; and low case Greek indices run from $1$ to $n-1$.

A Finsler vector bundle $(\mathscr{E},F,M)$ is a real vector bundle $\mathscr{E}$ of rank $n$ over a
$m$-dimensional manifold $M$, equipped with a Finsler metric $F$. The Finsler metric $F$ is a nonnegative function on $\mathscr{E}$ satisfying the following three conditions:

(1) $F$ is smooth on  the {slit bundle} $\mathscr{E}\backslash0$;

(2) $F$ is positively homogeneous, i.e., $F(\lambda y)=\lambda F(y)$, for any $\lambda>0$ and $y\in \mathscr{E}$;

(3) The Hessian $\frac{1}{2}[F^2]_{y^iy^j}(x,y)$ is positive definite, where $F(x,y):=F(y^is_i|_x)$ and $\{s_i\}$ is a local frame field of $\mathscr{E}$.

It should be noted that the first condition is natural and important. In fact, a Finsler metric cannot be smooth at the zero-section unless it is Riemannian. For this reason, most of the geometric quantities of a Finsler bundle cannot be defined at the zero-section.

Let $\pi:S\mathscr{E}\rightarrow M$ be the the projective sphere bundle and let $\pi^*\mathscr{E}$ be the pull-back bundle.
 For each $(x,[y])\in S\mathscr{E}$, the tautological section $\ell$ of $\pi^*\mathscr{E}$ is defined by
\[
\ell_{(x,[y])}=\frac{y^i}{F(y)}{\mathfrak{s}_i},
\]
where $\mathfrak{s}_i:=(x,[y],s_i|_x)$, $i=1,\cdots, n$, denote the local frame of $\pi^*\mathscr{E}$.

The Finsler metric $F$ induces naturally a Riemannian metric $g$ and the Cartan tensor $A$ on $\pi^*\mathscr{E}$. Let $(x^A,y^j)=[y^js_j]|_x$ be a local homogeneous coordinate system of $S\mathscr{E}$. Thus, $g=g_{ij}\,\mathfrak{t}^i\otimes \mathfrak{t}^j$ and $A=A_{ijk}\,\mathfrak{t}^i\otimes \mathfrak{t}^j\otimes \mathfrak{t}^k$, where
\[
g_{ij}(x,[y]):=\frac12\frac{\partial^2
F^2(x,y)}{\partial y^i\partial
y^j},\ \ A_{ijk}(x,[y]):=\frac{F}{4}\frac{\partial^3 F^2(x,y)}{\partial
y^i\partial y^j\partial y^k},
\]
and $\{\mathfrak{t}^j\}$ is the dual frame field of $\{\mathfrak{s}_j\}$.
It is easy to check that $F$ is Riemannian if and only if $A=0$. Set $(g^{ij}):=(g_{kl})^{-1}$ and $A^j_{ik}:=g^{jl}A_{lik}$.

Given an Ehresmann connection $\Theta\in \mathscr{A}^1(\mathscr{E},V\mathscr{E})$ on $\mathscr{E}$ (cf. \cite[Definition 1.10]{BGV}), we have a horizontal decomposition
\[
T(\mathscr{E}\backslash 0)=H(\mathscr{E}\backslash 0)\oplus V(\mathscr{E}\backslash 0),
\]
where $H(\mathscr{E}\backslash 0):=\text{ker}\Theta|_{\mathscr{E}\backslash 0}$ and $V(\mathscr{E}\backslash 0)$ denote the horizontal bundle and the vertical bundle of $\mathscr{E}\backslash 0$, respectively. Let $(x^A,y^j)=y^js_j|_{x}$ be a local coordinate system of $\mathscr{E}$. Set $N^j_A:=\langle\Theta(\frac{\partial}{\partial x^A}),dy^j \rangle$. Then we obtain two frame fields $\{\frac{\delta}{\delta x^A}\}_{A=1}^m$ and $\{ \frac{\delta}{\delta y^i}\}_{i=1}^n$ of $H (\mathscr{E}\backslash 0)$ and $V (\mathscr{E}\backslash 0)$ respectively, where
\begin{align*}
\left\{
\begin{array}{lll}
&\frac{\delta}{\delta x^A}:=\frac{\partial}{\partial x^A}-N^k_A\frac{\partial}{\partial y^k},\\
&\frac{\delta}{\delta y^i}:=F\frac{\partial}{\partial y^i}.
\end{array}
\right.
\end{align*}
It is easy to check that
\begin{align*}
\left\{
\begin{array}{lll}
&dx^A,\\
&{\delta y^i}:=\frac{dy^i+N^i_Adx^A}{F},
\end{array}
\right.
\end{align*}
is the dual frame field of $\{\frac{\delta}{\delta x^A}, \frac{\delta}{\delta y^i}\}$.
In particular, if we view $(x^A,y^j)$ as a local homogeneous coordinate system of $S\mathscr{E}$, then $\{\frac{\delta}{\delta x^A}, \frac{\delta}{\delta y^i}\}$ (resp. $\{dx^A,{\delta y^i}\}$) is also a local frame field of $T(S\mathscr{E})$ (resp. $T^*(S\mathscr{E})$). Hence, we obtain two horizontal decompositions
\[
T(S\mathscr{E})=H(S\mathscr{E})\oplus V(S\mathscr{E}),\ T^*(S\mathscr{E})=H^*(S\mathscr{E})\oplus V^*(S\mathscr{E}).
\]
Then the differential operator $d:C^\infty(S\mathscr{E})\rightarrow \mathscr{A}^1(S\mathscr{E})$ is
decomposed into $d=d^H+d^V$, where
\[
d^Hf:=\frac{\delta f}{\delta x^A}d x^A, \ d^Vf:=\frac{\delta f}{\delta y^i}\delta y^i, \ \forall f\in C^\infty(S\mathscr{E}).
\]
In the following, we assume that the Ehresmann connection $\Theta$ has been chosen. 

\begin{example}
Let $\mathscr{T}$ denote the tangent bundle of a Finsler manifold. Then the canonical horizontal decomposition of $T(\mathscr{T}\backslash 0)$ can be derived from the Bott connection (see \cite[p.\,35-38]{An} or \cite[(2.3.2a)]{BCS}). In particular, $\{\frac{\delta}{\delta x^i}\}$ and $\{\frac{\delta}{\delta y^i}\}$ have the same behavior under transformations induced by coordinate changes. However, this is no longer true for a general Finsler bundle $(\mathscr{E},F,M)$ even if $\text{rank}(\mathscr{E})=\text{dim}(M)$. For this reason, all the known connections in Finsler manifolds (e.g., the Cartan connection and the Chern connection) cannot be generalized to general Finsler bundles.
\end{example}

\section{Modification of a metric-compatible connection}
This section is devoted to investigating the connections of a Finsler bundle. According to \cite{BCS,K}, an operator $D$ is called a {\it connection of a Finlser bundle} $(\mathscr{E},F)$, if it is a connection on $\pi^*\mathscr{E}$ defined by $F$.
And a connection $D$ of $(\mathscr{E},F)$ is said to be {\it metric-compatible} if it is compatible with the Riemannian metric $g$ on $\pi^*\mathscr{E}$ induced by $F$.
In particular, if $\mathscr{F}$ is a Riemannian bundle and $\mathcal {D}$ is a metric-compatible connection on $\mathscr{F}$, then $\pi^*\mathcal {D}$ is a metric-compatible connection of the {\it Finsler bundle} $\mathscr{F}$.

Let $(\mathscr{E},F,M)$ be a Finsler bundle as in Section 2.
Given a connection $D$ of $\mathscr{E}$, let $(\theta_i^j)$ denote the connection $1$-form of $D$ with respect to the local frame $\{\mathfrak{s}_i\}$, i.e., $D \mathfrak{s}_i=\theta_i^j \otimes \mathfrak{s}_j$. Since $\theta_i^j\in \mathscr{A}^1(S\mathscr{E})$, by the horizontal decomposition of $T^*(S\mathscr{E})$, we decompose $\theta^i_j$ into
\[
\theta^i_j=\gamma^i_{jA}dx^A+\varrho^{i}_{jk}\delta y^k,
\]
 where $(x^A,y^j)$ is a local homogeneous coordinate system of $S\mathscr{E}$.
It is not hard to see that the horizontal component $(\gamma^i_{kA}dx^A)$ defines a connection on $\pi^*\mathscr{E}$, while the vertical component
$\varrho^j_{ik}\mathfrak{t}^i\otimes \mathfrak{t}^k \otimes \mathfrak{s}_j$
is a smooth section of $\pi^*\mathscr{E}^*\otimes\pi^*\mathscr{E}^*\otimes\pi^*\mathscr{E}$. Inspired by this observation and \cite{BS}, we introduce the modification of a connection.
\begin{definition}\label{d1}Let $D$ be a connection of $\mathscr{E}$. The modified connection $\nabla$ of $D$ is also a connection of $\mathscr{E}$, which is defined by
\[
\nabla \mathfrak{s}_i:=\left(\gamma^j_{iA}dx^A+2A^j_{ik}{\delta y^k}\right)\otimes \mathfrak{s}_j,\tag{3.1}\label{2.2}
\]
where $(\gamma^j_{iA}dx^A)$ is the horizontal component of the connection $1$-form of $D$ with respect to $\{\mathfrak{s}_i\}$.
\end{definition}

Denote by $D^H$ the connection defined by the horizontal component of the connection $1$-form of $D$. In this paper, we say that $D$ is {\it partially metric-compatible}, if $D^H$ satisfies
\[
d^H g(X,Y)=g(D^HX,Y)+g(X,D^HY),\  \forall \,X, Y\in \Gamma(\pi^*\mathscr{E}).
\]
Clearly, a metric-compatible connection is always partially metric-compatible. But a partially metric-compatible connection is not metric-compatible unless the Finsler metric is Riemannian.
However, we have the following proposition.
\begin{proposition}
Let $D$ and $\nabla$ be as in Definition \ref{d1}. Thus, $\nabla$ is metric-compatible if and only if $D$ is partially metric-compatible. Hence, the modified connection of a metric-compatible connection is still metric-compatible.
\end{proposition}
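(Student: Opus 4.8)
The plan is a direct computation with connection $1$-forms in a local homogeneous coordinate system $(x^A,y^j)$ on $S\mathscr{E}$ relative to the frame $\{\mathfrak{s}_i\}$. Write the connection $1$-form of $D$ as $\theta^i_j=\gamma^i_{jA}\,dx^A+\varrho^i_{jk}\,\delta y^k$; by Definition \ref{d1} the connection $1$-form of $\nabla$ is $\omega^i_j=\gamma^i_{jA}\,dx^A+2A^i_{jk}\,\delta y^k$. Hence $D$ and $\nabla$ share the \emph{same} horizontal component $(\gamma^i_{jA}\,dx^A)$, which is by definition the connection $1$-form of $D^H$, and they differ only in their vertical components. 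The first step is merely to record the coordinate form of metric-compatibility: a connection on $\pi^*\mathscr{E}$ with connection $1$-form $(\sigma^i_j)$ is metric-compatible if and only if $dg_{ij}=g_{kj}\sigma^k_i+g_{ik}\sigma^k_j$, an identity of $1$-forms on $S\mathscr{E}$.

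Next I would feed $\omega^i_j$ into this equation and split it by means of $d=d^H+d^V$ together with the decomposition of $\omega^i_j$ into its $dx^A$- and $\delta y^k$-parts; since $H^*(S\mathscr{E})$ and $V^*(S\mathscr{E})$ are complementary, the single identity is equivalent to two separate ones. The horizontal identity reads $d^Hg_{ij}=g_{kj}\gamma^k_{iA}\,dx^A+g_{ik}\gamma^k_{jA}\,dx^A$, i.e. $d^Hg(X,Y)=g(D^HX,Y)+g(X,D^HY)$, which is exactly the statement that $D$ is partially metric-compatible; and, since the horizontal component is unchanged by the modification, it is at the same time the horizontal part of the metric-compatibility equation for $D$ itself. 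The vertical identity reads $d^Vg_{ij}=2g_{kj}A^k_{il}\,\delta y^l+2g_{ik}A^k_{jl}\,\delta y^l$, and I would check that it holds \emph{automatically}: expanding $d^Vg_{ij}=\tfrac{\delta g_{ij}}{\delta y^l}\,\delta y^l$ and using the definitions of $g_{ij}$, $A_{ijl}$ and $\tfrac{\delta}{\delta y^l}$ produces the basic relation between $\tfrac{\delta g_{ij}}{\delta y^l}$ and the Cartan tensor, while lowering the free index via $g_{kj}A^k_{il}=A_{jil}$ and invoking the total symmetry of $A$ collapses the right-hand side to the same expression. This is precisely the point of the modification: replacing the vertical connection form by the Cartan term makes the vertical half of metric-compatibility an identity that demands nothing of $D$.

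Combining these two observations gives: $\nabla$ is metric-compatible $\iff$ its horizontal identity holds (the vertical one being automatic) $\iff$ $D$ is partially metric-compatible. The last assertion then drops out, since a metric-compatible $D$ is \emph{a fortiori} partially metric-compatible --- its full metric-compatibility equation contains its horizontal part --- so the equivalence forces $\nabla$ to be metric-compatible as well. I do not expect a genuine obstacle here: the only step with content is the vertical identity, which is a standard consequence of the definition of the Cartan tensor and its symmetry, and the rest is careful bookkeeping with the horizontal/vertical splitting of forms on $S\mathscr{E}$ (in particular checking that the vertical quantities being manipulated are genuinely tensorial, as was already noted in Section 3 for $(\varrho^i_{jk})$).
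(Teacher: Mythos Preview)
Your proposal is correct and follows essentially the same route as the paper: decompose $dg_{ij}=d^Hg_{ij}+d^Vg_{ij}$, observe that the vertical identity $d^Vg_{ij}=4A_{ijk}\,\delta y^k$ (equivalently, your $2g_{kj}A^k_{il}+2g_{ik}A^k_{jl}=4A_{ijl}$) is automatic from the definition and symmetry of the Cartan tensor, and conclude that metric-compatibility of $\nabla$ reduces to the horizontal condition $\delta g_{ij}/\delta x^A=g_{ik}\gamma^k_{jA}+g_{kj}\gamma^k_{iA}$, which is precisely partial metric-compatibility of $D$. The paper's proof is more terse but records exactly these two observations.
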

\begin{proof}
It is easy to see that
\[
d g_{ij}=d^Hg_{ij}+d^Vg_{ij}=d^Hg_{ij}+4A_{ijk}\delta y^k,
\]
where $g_{ij}=g(\mathfrak{s}_i,\mathfrak{s}_i)$. Hence,
 $\nabla$ is compatible with $g$ if and only if
$\delta g_{ij}/\delta x^A=g_{ik}\gamma^{k}_{jA}+g_{kj}\gamma^{k}_{iA}$,
that is, $D$ is partially metric-compatible.
\end{proof}

\begin{example}Let $\mathscr{T}$ be the tangent bundle of a Finsler manifold.
It follows from \cite[p.\,39]{BCS} that the modified connection of the Berwald connection is the Hashigchi connection, and the modified connection of the Chern connection is the Cartan connection. In particular, the Cartan connection is metric-compatible while the Chern connection is partially metric-compatible.
\end{example}

We will see below that the modified connection of a metric-compatible connection always satisfies some interesting properties, which allow us to establish a GBC theorem for the original connection.

In the rest of the paper, we assume that  $\{e_i\}_{i=1}^n$ is a local $g$-orthonormal field for $\pi^*\mathscr{E}$ with $e_n=\ell$. Let $\nabla$ be the modified connection of a metric-compatible connection. Denote by $\varpi^i_j$ (resp. $\Omega^i_j$) the connection (resp. curvature) form of $\nabla$ with respect to $\{e_i\}$, i.e.,
\[
\nabla e_i=:\varpi_i^j\otimes e_j,\ \Omega_i^j:=d\varpi_i^j-\varpi_i^k\wedge \varpi_k^j.
\]
First we have the following proposition.

\begin{proposition}\label{sp}Let $\nabla$ be the modified connection of a metric-compatible connection.
Then for each $x\in M$, we have
\[
d\nu_x=i^*_x(\varpi^n_1\wedge\cdots\wedge\varpi_{n-1}^n),
\]
where $d\nu_x$ is the Riemannian volume form of $S_x\mathscr{E}:=\pi^{-1}(x)$ induced by $F$, and $i_x:S_x\mathscr{E}\hookrightarrow S\mathscr{E}$ is the injective map.
\end{proposition}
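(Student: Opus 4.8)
The plan is to reduce the statement to a computation on the fiber $S_x\mathscr{E}\cong\mathbb{S}^{n-1}$, exploiting the special form (\ref{2.2}) of the modified connection together with the metric-compatibility of $\nabla$. First I would fix $x\in M$ and work in the local $g$-orthonormal frame $\{e_i\}$ with $e_n=\ell$; restricting everything along $i_x$ kills all horizontal forms, so $i_x^*\varpi^j_i$ only sees the vertical (fiber) directions. Since $\nabla$ is metric-compatible, $(\varpi^j_i)$ is skew-symmetric, so the forms $\varpi^n_\alpha$ ($\alpha=1,\dots,n-1$) are precisely the components measuring how the tautological section $\ell=e_n$ moves inside the fiber. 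The key point is that, because the vertical part of $\nabla\mathfrak s_i$ is exactly $2A^j_{ik}\,\delta y^k$, the $\delta y^k$ and hence $\nabla^V\ell$ can be computed explicitly in terms of the canonical data of the fiber.

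Concretely, I would compute $\nabla^V e_n=\nabla^V\ell$ directly. Writing $\ell=y^i\mathfrak s_i/F$ and differentiating vertically, one gets $\nabla^V\ell=\frac{\delta y^i}{F}\cdot(\text{something})$; the crucial identity is that $A_{ijk}\ell^k=0$ (the Cartan tensor annihilates the distinguished direction, since $F$ is positively homogeneous of degree one), which forces the vertical covariant derivative of $\ell$ to reduce to the "radial'' derivative $\delta y^\alpha e_\alpha$ up to the normalization by $F$. Consequently $i_x^*\varpi^n_\alpha$ equals (up to sign/normalization) the coframe $\{\delta y^\alpha\}$ dual to an orthonormal frame $\{e_\alpha\}$ of $T(S_x\mathscr{E})$ with respect to the metric that $F$ induces on the fiber. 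Indeed, the induced Riemannian metric on $S_x\mathscr{E}$ is, by construction, the one for which $\{e_\alpha\}$ is orthonormal, so $i_x^*(\varpi^n_1\wedge\cdots\wedge\varpi^n_{n-1})$ is, up to orientation, exactly the Riemannian volume form $d\nu_x$. The orientation bookkeeping — checking that the sign is $+1$ and not $(-1)^{n-1}$ — is handled by fixing the orientation of $S_x\mathscr{E}$ compatibly with the orientation of $\mathscr{E}$ and the outward normal $\ell$, which is the standard convention.

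The main obstacle I anticipate is the explicit identification of $i_x^*\varpi^n_\alpha$ with the fiber coframe, i.e.\ showing that the transition from the coordinate vertical frame $\{\frac{\delta}{\delta y^i}\}$ to the $g$-orthonormal frame $\{e_i\}$ does not introduce extra terms beyond what the Cartan-tensor correction $2A^j_{ik}\,\delta y^k$ already supplies. This requires carefully expanding $\nabla e_\alpha$ in the $\{e_i\}$ frame, using $e_\alpha = c^i_\alpha\mathfrak s_i$ for a matrix $(c^i_\alpha)$ of transition functions depending on $(x,[y])$, and tracking how $d c^i_\alpha$ interacts with the $2A^j_{ik}\,\delta y^k$ term; the payoff is that the Cartan-tensor term is \emph{designed} (this is the content of Definition \ref{d1}, following \cite{BS}) to cancel exactly the discrepancy $d^V g_{ij}=4A_{ijk}\,\delta y^k$, so that $i_x^*\varpi^n_\alpha$ becomes the Levi-Civita–type connection form of the round metric on the fiber against $e_n=\ell$, whose wedge is the volume form. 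Once this cancellation is verified, taking the wedge $\varpi^n_1\wedge\cdots\wedge\varpi^n_{n-1}$ and pulling back along $i_x$ yields $d\nu_x$, and the proof concludes.
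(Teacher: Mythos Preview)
Your proposal is correct and follows essentially the same route as the paper: both arguments restrict to the fiber, compute $i_x^*\nabla e_n = i_x^*\nabla\ell$ using the explicit vertical part $2A^j_{ik}\,\delta y^k$ of the modified connection together with the key identity $y^kA_{kij}=0$, and then identify the wedge $\varpi_n^1\wedge\cdots\wedge\varpi_n^{n-1}$ with the fiber volume form. The only difference is in the final identification---the paper writes $d\nu_x$ in homogeneous coordinates and matches it via a determinant identity obtained by contracting $\theta^1\wedge\cdots\wedge\theta^n$ with $e_n$, whereas you argue (equivalently) that $\{i_x^*\varpi_n^\alpha\}$ is the orthonormal coframe dual to $\{e_\alpha\}$; note also that you only need to expand $\nabla e_n$, not $\nabla e_\alpha$, so the ``obstacle'' you anticipate is milder than stated.
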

\begin{proof}Let $(x^A,y^i)$ denote a local homogeneous coordinate system of $S\mathscr{E}$. Then
\[
d\nu_x(y)={\sqrt{\det g_{ij}(x,y)}}\overset{n}{\sum_{i=1}}{(-1)^{i-1}}\,\frac{y^{i}}{F}d\left(\frac{y^1}{F}\right)\wedge \cdots \wedge \widehat{d\left(\frac{{y}^i}{F}\right)}\wedge \cdots \wedge d\left(\frac{y^n}{F}\right).
\]

Set $e_i=: B_i^k \mathfrak{s}_k$. It is easy to see that $B^k_n=y^k/F(y)$ and $\det (B^{-1})=\sqrt{\det g_{ij}(x,y)}$. Thus,
$\nabla e_n=\varpi_n^j\otimes e_j$ together with (\ref{2.2}) and $y^kA_{kij}=0$ yields
\[
i_x^*(\varpi_n^k B^i_k)=d\left(\frac{y^i}{F(y)}\right),
\]
which implies that
\[
i^*_x(\varpi^1_n \wedge \cdots \wedge \varpi^{n-1}_n)=(B^{-1})^1_{i_1}\cdots (B^{-1})^{n-1}_{i_{n-1}}d\left(\frac{y^{i_1}}{F}\right)\wedge \cdots \wedge  d\left(\frac{y^{i_{n-1}}}{F}\right).
\]
Let $\{\theta^i\}$ denote the dual frame field of $\{e_i\}$. By $e_n$ contracting $\theta^1\wedge \cdots \wedge \theta^n$, one has
\[
(B^{-1})^1_{i_1}\cdots (B^{-1})^{n-1}_{i_{n-1}}=(-1)^{n-1}\overset{n}{\sum_{i=1}}{(-1)^{i-1}}\det(B^{-1})\frac{y^i}{F}\frac{\delta^{1\cdots \hat{i}\cdots n}_{i_1\cdots i_{n-1}}}{(n-1)!}.
\]
\end{proof}

Before continuing to investigate the modified connection of a metric-compatible connection, we introduce some notions first.
Set $\mathscr{A}^{i,j}:=\Gamma(S\mathscr{E},\wedge^iT^*S\mathscr{E}\otimes \wedge^j \pi^*\mathscr{E})$ and $\mathscr{A}:=\sum_{i,j}\mathscr{A}^{i,j}$. Then $\mathscr{A}$ is a bigraded algebra (cf. \cite{MQ}), that is, for $a\otimes b\in \mathscr{A}^{i,j}$ and $c\otimes d\in \mathscr{A}^{k,l}$, the product of $a\otimes b$ and $c\otimes d$ is defined by
\[
(a\otimes b)\cdot(c\otimes d)=(-1)^{jk}(a\wedge c)\otimes (b\wedge d).
\]
In the rest of this section, we assume that $(\mathscr{E},F)$ is an oriented Finsler bundle.
Thus, $\pi^*\mathscr{E}$ is an oriented bundle and therefore, we can induce the Berezin integral $\mathscr{B}$ (cf. \cite{BGV}) to $\mathscr{A}$ by
\[
\mathscr{B}:a\otimes \eta\in \mathscr{A}\mapsto a(\mathscr{B}\eta)\in \sum_{i}\mathscr{A}^{i,0},
\]
with
\[
\mathscr{B}(e_I)=\left\{
\begin{array}{lll}
&\epsilon_I,&|I|=n,\\
&0,&\text{otherwise}.
\end{array}
\right.
\]
We always identify $\mathfrak{so}(\pi^*\mathscr{E})$ with $\wedge^2 \pi^*\mathscr{E}$ by the map
\[
B\in \mathfrak{so}(\pi^*\mathscr{E})\mapsto\frac12\underset{i,j}{\sum}g(Be_i,e_j)e_i\wedge e_j.
\]
In particular, the curvature of an arbitrary metric-compatible connection is viewed as an element in $\mathscr{A}^{2,2}$.

The arguments similar  to those in \cite[p.52-55]{BCS} show the following lemma.
\begin{lemma}\label{newone}Let $\nabla$ be the modified connection of a metric-compatible connection. Set
\[
U_t:=\mathscr{B}({e^{-\Theta_t}}):= \mathscr{B}\left(e^{-(\frac{t^2}2+\mathbbm{i}t\nabla \ell+\Omega)}\right)=\mathscr{B}\left(e^{-\frac{t^2}2}\overset{\infty}{\underset{k=0}{\sum}}\frac{(-1)^k}{k!}(\mathbbm{i}t\nabla \ell+\Omega)^k\right),
\]
where $\Omega$ is the curvature of $\nabla$ and $\mathbbm{i}:=\sqrt{-1}$. Thus, $U_t$ is a closed $n$-form on $S\mathscr{E}$ and
\[
\frac{d}{dt}U_t=-\mathbbm{i} d\left[\mathscr{B}\left(\ell\cdot {e^{-\Theta_t}}\right)\right].
\]
\end{lemma}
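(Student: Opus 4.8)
\medskip
\noindent\emph{Plan of proof.}
The plan is to run the Mathai--Quillen scheme on $S\mathscr{E}$, exploiting that $\nabla$, being the modified connection of a metric-compatible connection, is itself metric-compatible. The only structural input that needs this hypothesis is the commutation rule
\[
d\circ\mathscr{B}=\mathscr{B}\circ\nabla\qquad\text{on }\mathscr{A},
\]
in which $\nabla$ on the right denotes the covariant exterior derivative induced on $\mathscr{A}=\sum_{i,j}\mathscr{A}^{i,j}$ by the connection $\nabla$ on $\pi^*\mathscr{E}$; it holds because the connection $1$-form $(\varpi_i^j)$ in the $g$-orthonormal frame $\{e_i\}$ is skew-symmetric, so that $\nabla(e_1\wedge\cdots\wedge e_n)=\big(\sum_i\varpi_i^i\big)e_1\wedge\cdots\wedge e_n=0$ and hence $\nabla$ passes through $\mathscr{B}$ (cf. \cite{BGV}). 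I shall also use two purely algebraic facts about the bigraded algebra $\mathscr{A}$: it is super-commutative for the total degree $i+j$, so $\Theta_t$, $\nabla\ell$, $\Omega$ and $\frac{d}{dt}\Theta_t$, all of even total degree, are central. In particular $e^{-\Theta_t}\in\sum_k\mathscr{A}^{k,k}$ is a well-defined finite sum, $U_t=\mathscr{B}(e^{-\Theta_t})\in\mathscr{A}^{n,0}$ is an $n$-form on $S\mathscr{E}$, and
\[
\nabla e^{-\Theta_t}=-(\nabla\Theta_t)\,e^{-\Theta_t},\qquad \tfrac{d}{dt}e^{-\Theta_t}=-\big(\tfrac{d}{dt}\Theta_t\big)e^{-\Theta_t}.
\]

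Next I would differentiate the three summands of $\Theta_t$. The second Bianchi identity gives $\nabla\Omega=0$; the scalar $\frac{t^2}{2}$ is annihilated by $\nabla$; and $\nabla(\nabla\ell)=\nabla^2\ell=\Omega_n^j\otimes e_j$, which under the identification $\mathfrak{so}(\pi^*\mathscr{E})\cong\wedge^2\pi^*\mathscr{E}$ (the action of an endomorphism on a vector corresponds to $\iota$ of the associated $2$-vector) is exactly $\iota_\ell\Omega$. Hence
\[
\nabla\Theta_t=\mathbbm{i}\,t\,\iota_\ell\Omega,\qquad \tfrac{d}{dt}\Theta_t=t+\mathbbm{i}\,\nabla\ell .
\]

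For closedness, the displays above give $dU_t=\mathscr{B}(\nabla e^{-\Theta_t})=-\mathbbm{i}\,t\,\mathscr{B}\big((\iota_\ell\Omega)e^{-\Theta_t}\big)$, and it remains to see the Berezin integral vanishes; this is where $e_n=\ell$ enters. Metric-compatibility forces $\varpi_n^n=\Omega_n^n=0$, so $\nabla\ell=\sum_\alpha\varpi_n^\alpha\otimes e_\alpha$ contains no $e_n$; writing $\Omega=\Omega'+\Omega''$ with $\Omega':=\sum_\alpha\Omega_\alpha^n\,e_\alpha\wedge e_n$ the part containing $e_n$ and $\Omega''$ the $e_n$-free part, one has $(\Omega')^2=0$, so $e^{-\Theta_t}=e^{-\Theta''_t}(1-\Omega')$ with $\Theta''_t:=\Theta_t-\Omega'$, and every occurrence of $e_n$ in $e^{-\Theta_t}$ comes from the single factor $\Omega'$. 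Moreover $\iota_\ell\Omega=\iota_\ell\Omega'=\sum_\alpha\Omega_n^\alpha e_\alpha$ carries no $e_n$, and $(\iota_\ell\Omega)\cdot\Omega'=0$: its coefficient is, up to sign, $\sum_{\alpha,\beta}\Omega_n^\alpha\wedge\Omega_n^\beta\otimes e_\alpha\wedge e_\beta$, a tensor symmetric in the commuting $2$-forms $\Omega_n^\alpha$ against the antisymmetric $e_\alpha\wedge e_\beta$. Consequently the $\Omega'$-term of $\mathscr{B}\big((\iota_\ell\Omega)e^{-\Theta_t}\big)$ vanishes, while its $e_n$-free term cannot attain $\wedge^n\pi^*\mathscr{E}$, so $dU_t=0$.

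Finally, for the transgression I would expand both sides. One has $\frac{d}{dt}U_t=-t\,\mathscr{B}(e^{-\Theta_t})-\mathbbm{i}\,\mathscr{B}\big((\nabla\ell)e^{-\Theta_t}\big)$; and from $\nabla(\ell\cdot e^{-\Theta_t})=(\nabla\ell)e^{-\Theta_t}-\ell\cdot\nabla e^{-\Theta_t}=(\nabla\ell)e^{-\Theta_t}+\mathbbm{i}\,t\,\ell\cdot(\iota_\ell\Omega)e^{-\Theta_t}$ one obtains $-\mathbbm{i}\,d\big[\mathscr{B}(\ell\cdot e^{-\Theta_t})\big]=-\mathbbm{i}\,\mathscr{B}\big((\nabla\ell)e^{-\Theta_t}\big)+t\,\mathscr{B}\big(\ell\cdot(\iota_\ell\Omega)e^{-\Theta_t}\big)$. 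A one-line computation gives $\ell\cdot(\iota_\ell\Omega)=\Omega'$, and since $e^{-\Theta''_t}$ contains no $e_n$ one has $\mathscr{B}(e^{-\Theta''_t})=0$, whence $\mathscr{B}\big(\ell\cdot(\iota_\ell\Omega)e^{-\Theta_t}\big)=\mathscr{B}(\Omega'e^{-\Theta''_t})=-\mathscr{B}(e^{-\Theta_t})$; substituting, the two sides agree, which is the asserted identity $\frac{d}{dt}U_t=-\mathbbm{i}\,d[\mathscr{B}(\ell\cdot e^{-\Theta_t})]$. The main obstacle throughout is the sign bookkeeping in the super-commutative bigraded algebra $\mathscr{A}$ --- deciding which elements are central, tracking the $(-1)^{jk}$ signs through products, and isolating the two vanishing facts $(\iota_\ell\Omega)\cdot\Omega'=0$ and $\mathscr{B}(e^{-\Theta''_t})=0$, both of which rest on the normalisation $e_n=\ell$ together with the skew-symmetry of $\varpi$ forced by metric-compatibility; everything else (the Bianchi identity, $\nabla^2\ell=\iota_\ell\Omega$, and $d\circ\mathscr{B}=\mathscr{B}\circ\nabla$) is routine once one works in the orthonormal frame.
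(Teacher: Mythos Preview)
Your proof is correct and follows precisely the Mathai--Quillen scheme that the paper itself invokes; the paper does not spell out the argument but simply refers to \cite[p.\,52--55]{BCS}, and your write-up supplies exactly those details in the bigraded super-algebra $\mathscr{A}$ (the commutation $d\circ\mathscr{B}=\mathscr{B}\circ\nabla$, the Bianchi identity, the splitting $\Omega=\Omega'+\Omega''$ along $e_n=\ell$, and the two vanishing facts $(\iota_\ell\Omega)\cdot\Omega'=0$ and $\mathscr{B}(e^{-\Theta''_t})=0$). In particular your argument uses only metric-compatibility of $\nabla$, which is guaranteed by Proposition~3.2, so nothing further about the modification is needed here.
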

\begin{remark}\label{mqm} Mathai-Quillen's proof of the formula (1.1) was carried out by constructing a Thom form on $TM$, which pulled back by the zero-section is exactly the (geometric) Euler form. Although $U_1$ is similar to the Mathai-Quillen's Thom form restricted to $S\mathscr{E}$, the argument in \cite{MQ} cannot be applied to Finsler bundles, since the connection $1$-form of any connection cannot be extended to the zero-section for a general Finsler metric.
\end{remark}

According to \cite[Definition 1.35]{BGV}, $U_0=\mathscr{B}({e^{-\Omega}})$ is exactly the Pfaffian $\Pf(-\Omega)$.
Lemma \ref{newone} then yields the following result.
\begin{proposition}\label{newe}Let $\nabla$ be the modified connection of a metric-compatible connection. Then
\[
\Pf(-\Omega)=(-1)^{n-1}d\left(\overset{\left[\frac{n-1}{2}\right]}{\underset{k=0}{\sum}}\frac{(-1)^k2^{\frac{n}{2}}\Phi_k}{k!(n-1-2k)!2^{2k+1}}\Gamma\left(\frac{n-2k}{2}\right)\right),
\]
where $\Omega$ is the curvature of $\nabla$, $\Gamma(s)$ is the gamma function and
\[
\Phi_k:=\sum\epsilon_{\alpha_1\ldots \alpha_{n-1}}\Omega_{\alpha_1}^{\alpha_2}\wedge\cdots\wedge\Omega_{\alpha_{2k-1}}^{\alpha_{2k}}\wedge \varpi_{\alpha_{2k+1}}^n\wedge\cdots\wedge \varpi_{\alpha_{n-1}}^n.
\]
\end{proposition}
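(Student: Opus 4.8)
The plan is to deduce Proposition \ref{newe} from Lemma \ref{newone} by evaluating the $t$-derivative identity and then integrating in $t$ from $0$ to $\infty$, but a more direct route is available because $U_0 = \Pf(-\Omega)$ can be reached by a purely algebraic manipulation of the Berezin integral. First I would recall that $U_0 = \mathscr{B}(e^{-\Omega})$ and, more generally, expand $U_t = e^{-t^2/2}\,\mathscr{B}\!\big(e^{-(\mathbbm{i}t\nabla\ell + \Omega)}\big)$. Since $\nabla\ell$ and $\Omega$ are elements of $\mathscr{A}^{1,1}$ and $\mathscr{A}^{2,2}$ respectively, their $\mathscr{A}$-commutator is governed by the bigraded sign rule, and one checks that the relevant powers pick out exactly the term where the total $\pi^*\mathscr{E}$-degree equals $n$. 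Expanding $(\mathbbm{i}t\nabla\ell + \Omega)^k$ by the (super)binomial theorem and applying $\mathscr{B}$, only those monomials with $j$ factors of $\Omega$ and $n-2j$ factors of $\nabla\ell$ survive (so that $2j + (n-2j) = n$), and $\mathscr{B}$ then converts each into an $\epsilon$-contraction. Writing $\nabla\ell = \varpi^i_n\, e_i \pmod{e_n}$ in the orthonormal frame $\{e_i\}$ with $e_n = \ell$ (here one uses that the $\ell$-component of $\nabla\ell$ vanishes, a consequence of metric-compatibility and $|\ell|=1$), one obtains $U_t$ as an explicit polynomial in $t$ whose coefficients are wedge products of $\Omega^\alpha_\beta$'s and $\varpi^n_\alpha$'s — precisely the quantities $\Phi_k$.

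Next I would extract $U_0$ from this expansion: setting $t=0$ kills every $\nabla\ell$ factor, leaving $U_0 = \mathscr{B}(e^{-\Omega}) = \Pf(-\Omega)$, which is the standard Pfaffian identity of \cite[Definition 1.35]{BGV}. To get the transgression formula, I would instead integrate the identity $\tfrac{d}{dt}U_t = -\mathbbm{i}\, d\big[\mathscr{B}(\ell\cdot e^{-\Theta_t})\big]$ of Lemma \ref{newone} over $t\in[0,\infty)$. The left side telescopes to $U_\infty - U_0$; the $e^{-t^2/2}$ factor forces $U_\infty = 0$ (and makes all $t$-integrals convergent), so
\[
-U_0 = -\mathbbm{i}\, d\!\left[\int_0^\infty \mathscr{B}\big(\ell\cdot e^{-\Theta_t}\big)\,dt\right].
\]
It remains to compute the $t$-integral of $\mathscr{B}(\ell\cdot e^{-\Theta_t})$ term by term. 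Expanding $e^{-\Theta_t}$ as above and contracting with $\ell = e_n$, the Berezin integral $\mathscr{B}$ selects monomials with total $\pi^*\mathscr{E}$-degree $n-1$ among the $\{e_1,\dots,e_{n-1}\}$ part; a monomial with $k$ curvature factors contributes $n-1-2k$ factors of $\varpi^n_\alpha$, which is exactly the combinatorial content of $\Phi_k$. The scalar prefactor is then $e^{-t^2/2} t^{\,n-1-2k}$ times binomial/factorial constants, and $\int_0^\infty e^{-t^2/2} t^{\,n-1-2k}\,dt = 2^{(n-2k)/2 - 1}\,\Gamma\!\big(\tfrac{n-2k}{2}\big)$ by the substitution $s = t^2/2$. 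Collecting the $\mathbbm{i}$-powers (each pair of $\nabla\ell$ factors carrying $\mathbbm{i}^2 = -1$) against the overall $-\mathbbm{i}$ yields the sign $(-1)^{n-1}$ and the stated $(-1)^k$, and matching the remaining constants gives precisely the coefficient $\dfrac{(-1)^k 2^{n/2}}{k!\,(n-1-2k)!\,2^{2k+1}}\,\Gamma\!\big(\tfrac{n-2k}{2}\big)$; the sum truncates at $k = \lfloor (n-1)/2\rfloor$ since one cannot have more than $\lfloor(n-1)/2\rfloor$ curvature factors inside an $(n-1)$-fold Berezin integral.

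The main obstacle is the bookkeeping in the last step: correctly tracking the bigraded signs coming from the product on $\mathscr{A}$ when permuting $\ell$ past the $\nabla\ell$ and $\Omega$ factors, and disentangling the three independent sources of signs — the $\mathbbm{i}^{2j}$ from the imaginary coefficient of $\nabla\ell$, the $(-1)^k$ from the Taylor series of the exponential, and the $(-1)^{n-1}$ from the position of $e_n = \ell$ in the Berezin determinant — so that they assemble into exactly the coefficients in the statement. I would handle this by first doing the computation in the orthonormal frame with $e_n = \ell$ fixed, where $\nabla\ell$ has no $e_n$-component, reducing everything to an antisymmetrization over the indices $1,\dots,n-1$; the identity $y^k A_{kij} = 0$ (equivalently $A(\ell,\cdot,\cdot)=0$) and the normalization $|\ell| = 1$ ensure there are no extra boundary or Cartan-tensor terms, so that the structure equations behave formally like those of a metric connection on a rank $(n-1)$ bundle. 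Once the frame is fixed this way, the remaining work is the routine evaluation of the Gaussian moment integrals and a constant-matching check, which I would present compactly rather than in full detail.
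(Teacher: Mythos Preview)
Your proposal is correct and mirrors the paper's own argument: integrate the identity of Lemma~\ref{newone} over $t\in[0,\infty)$, use $U_\infty=0$, expand $\ell\cdot e^{-\Theta_t}$ in the bigraded algebra to extract the $\Phi_k$, and evaluate the Gaussian moments $\int_0^\infty e^{-t^2/2}\,t^{\,n-1-2k}\,dt = 2^{(n-2k)/2-1}\Gamma\!\big(\tfrac{n-2k}{2}\big)$. The one detail the paper makes explicit and you gloss over is the parity issue: for odd $n$ the $\mathbbm{i}$-counting leaves a residual factor of $\mathbbm{i}$ (the paper packages this as $\epsilon(n)$) rather than a pure sign, and this is disposed of by observing that $\Pf(-\Omega)=U_0=0$ in odd rank, so both sides of the identity vanish trivially.
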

\begin{proof} Denote by $\Xi$ the component of $e^{-(\mathbbm{i}t\nabla\ell+\Omega)}$ in $\mathscr{A}^{n-1,n-1}$. It is not hard to see that
\begin{align*}
\Xi&=\overset{\infty}{\underset{k=0}{\sum}}\frac{(-1)^k}{k!}\left(\underset{\{s:(k-s)+2s=n-1,\,0\leq s \leq k\}}{\sum}
\left(
\begin{array}{lll}
k\\
s
\end{array}
\right)
(\mathbbm{i}t\nabla\ell)^{k-s}\cdot\Omega^s\right)\\
&=\overset{n-1}{\underset{k=\left[\frac{n}{2}\right]}{\sum}}\frac{(-1)^k}{(n-1-k)!(2k-(n-1))!}(\mathbbm{i}t\nabla\ell)^{2k-(n-1)}\cdot\Omega^{n-1-k}\\
&=(-\mathbbm{i})^{n-1}\overset{\left[\frac{n-1}{2}\right]}{\underset{k=0}{\sum}}\frac{(t\nabla\ell)^{n-1-2k}\cdot\Omega^{k}}{k!(n-1-2k)!}.
\end{align*}
Hence, we obtian
\[
\ell\cdot\Xi=\frac{(-1)^{n-1}}{\epsilon(n+1)}\overset{\left[\frac{n-1}{2}\right]}{\underset{k=0}{\sum}}\frac{t^{n-1-2k}(-1)^k}{k!(n-1-2k)!2^k}\Phi_k\otimes e_1\wedge\cdots\wedge e_n,\tag{3.2}\label{3.2}
\]
where
\begin{align*}
\epsilon(n):=\left\{
\begin{array}{lll}
&1,\ n=2p,\\
&\mathbbm{i},\ n=2p+1.
\end{array}
\right.
\end{align*}

Since $U_t=e^{-t^2/2}\mathscr{B}(e^{-(it\nabla\ell+\Omega)})\rightarrow 0$ (as $t\rightarrow\infty$), Lemma \ref{newone} together with (\ref{3.2}) then yields
\begin{align*}
\Pf(-\Omega)&=U_0=\mathbbm{i} d\left[\int^\infty_0\mathscr{B}(\ell\cdot{e^{-\Theta_t}})dt\right]=\mathbbm{i}d \left[\int^\infty_0 e^{-t^2}\mathscr{B}(\ell\cdot\Xi) dt\right]\\
&=(-1)^{n-1}\epsilon(n)d\left(\overset{\left[\frac{n-1}{2}\right]}{\underset{k=0}{\sum}}\frac{(-1)^k2^{\frac{n}{2}}\Phi_k}{k!(n-1-2k)!2^{2k+1}}\Gamma\left(\frac{n-2k}{2}\right)\right).
\end{align*}

In particular, if $n=2p+1$, then $\Pf(-\Omega)=U_0=0$ and (therefore)
\[
(-1)^{n-1}d\left(\overset{\left[\frac{n-1}{2}\right]}{\underset{k=0}{\sum}}\frac{(-1)^k2^{\frac{n}{2}}\Phi_k}{k!(n-1-2k)!2^{2k+1}}\Gamma\left(\frac{n-2k}{2}\right)\right)
=0=\Pf(-\Omega).
\]
\end{proof}

Let $\nabla$ be the modified connection of a metric-compatible connection $D$.
Define
\[
\mathbf{\Omega}^\nabla:=\frac{1}{(2\pi)^{\frac{n}{2}}}\Pf(-\Omega),\ \Pi:=\frac{(-1)^{n-1}}{\pi^{\frac{n}{2}}}\left(\overset{\left[\frac{n-1}{2}\right]}{\underset{k=0}{\sum}}\frac{(-1)^k\Phi_k}{k!(n-1-2k)!2^{2k+1}}\Gamma\left(\frac{n-2k}{2}\right)\right).
\]
An easy calculation yields that
\begin{align*}
\mathbf{\Omega}^\nabla&=\left\{
\begin{array}{lll}
&\frac{(-1)^{p}}{2^{2p}\pi^pp!}\epsilon_{i_1\ldots i_{2p}}\Omega_{i_1}^{i_2}\wedge\cdots\wedge \Omega_{i_{2p-1}}^{i_{2p}},&n=2p,\\
\\
&0,&n=2p+1.
\end{array}
\right.\\
\Pi&=\left\{
\begin{array}{lll}
&\frac{1}{(2\pi)^p}\overset{p-1}{\underset{k=0}{\sum}}\frac{(-1)^{k+1}}{(2p-2k-1)!!k!2^k}\Phi_k,&n=2p,\\
\\
&\frac{1}{\pi^p2^{2p+1}p!}\overset{p}{\underset{k=0}{\sum}}(-1)^{k}\left(
\begin{array}{lll}
p\\
k
\end{array}
\right)\Phi_k,&n=2p+1.
\end{array}
\right.
\end{align*}
Hence, $\mathbf{\Omega}^\nabla$ and $\Pi$ are of the same form as the ones defined in \cite{BC,Ch1,Ch2,Sh1}.
The proposition above then implies
\[
\mathbf{\Omega}^\nabla=d\Pi. \tag{3.3}\label{3.3}
\]
It is not hard to see that
(\ref{3.3}) is valid not only for all metric-compatible connections but also for all oriented Finsler bundles. Recall that $\nabla$ is the modification of $D$ and therefore, $\nabla$ is also metric-compatible. The Chern-Weil theory then yields
\[
\mathbf{\Omega}^D-\mathbf{\Omega}^\nabla=d\left[\int^1_0\frac{1}{(2\pi)^{\frac{n}{2}}}\mathscr{B}\left(\exp(-\Omega_s)\cdot\frac{\partial{D_s}}{\partial s}\right)ds\right]=:d\Upsilon_0,
\]
where $D_s:=s\nabla+(1-s)D$ and $\Omega_s$ is the curvature of $D_s$ (see Proposition \ref{trass}).

From above, we have
\[
\frac{\mathbf{\Omega}^D+\mathfrak{E}}{\V(x)}=d\left(\frac{\Upsilon_1}{\V(x)}\right),\tag{3.4}\label{3.4}
\]
where $\V(x)$ is the Riemannian volume of $S_x\mathscr{E}$ and
\begin{align*}
&\mathfrak{E}:=-d\Upsilon_0-d\log\V(x)\wedge \Upsilon_1-{d\Upsilon_2},\ \Upsilon_1:=\frac{(-1)^{n-1}}{2\pi^{\frac{n}{2}}}\frac{\Phi_0}{(n-1)!}\Gamma\left(\frac{n}{2}\right),\\ &\Upsilon_2:=\Pi-\Upsilon_1=\frac{(-1)^{n-1}}{\pi^{\frac{n}{2}}}\left(\overset{\left[\frac{n-1}{2}\right]}{\underset{k=1}{\sum}}\frac{(-1)^k\Phi_k}{k!(n-1-2k)!2^{2k+1}}\Gamma\left(\frac{n-2k}{2}\right)\right).
\end{align*}
(\ref{3.4}) implies that all the $({\mathbf{\Omega}^D+\mathfrak{E}})/{\V(x)}$s of metric-compatible connections $D$ are in the same DeRham cohomology
class. Recall that $[\mathbf{\Omega}^D]\in H^n(S\mathscr{E})$ is the geometric Euler class of $\pi^*(S\mathscr{E})$. Thus, $[({\mathbf{\Omega}^D+\mathfrak{E}})/{\V(x)}]\in H^n(S\mathscr{E})$ can be viewed as the modified geometric Euler class of $\pi^*\mathscr{E}$. In particular, if $\V(x)=\text{const.}$, then $\mathfrak{E}$ is an exact $n$-form and $[({\mathbf{\Omega}^D+\mathfrak{E}})/{\V(x)}]$ is the geometric Euler class of $\pi^*\mathscr{E}$ (up to a constant).

\section{Proof of Theorem 1.1}
In this section,
we assume that $(\mathscr{E},F)$ is an oriented Finsler manifold of rank $n$ over an oriented closed $n$-dimensional manifold $M$. The reason why we require
$\text{rank}(\mathscr{E})=\text{dim}(M)$ here is to integrate the pullback of (3.4) over the underlying manifold. Notations are as in Section 3. First we have the following lemma.

\begin{lemma}\label{inter}Let $X\in \Gamma(\mathscr{E})$ be a  smooth section with isolated zeros $\{x_\mathfrak{a}\}_{\mathfrak{a}=1}^k$. Thus, for each $\mathfrak{a}$, there exists a small neighborhood $U_\mathfrak{a}$ of $x_\mathfrak{a}$ such that
\[
\int_{\partial  U_\mathfrak{a}}[X]^*\left(\frac{\Upsilon_1}{\V(x_\mathfrak{a})}\right)=(-1)^{n-1}\frac{\text{locdeg}(X;x_\mathfrak{a})}{\vol(\mathbb{S}^{n-1})},
\]
where $[X]:M\backslash \cup \{x_\mathfrak{a}\}\rightarrow S\mathscr{E} \backslash \cup S_{x_\mathfrak{a}}\mathscr{E}$ is the section induced by $X$, and
$\text{locdeg}(X;x_\mathfrak{a})$ is the local degree of $X$ at $x_\mathfrak{a}$.
\end{lemma}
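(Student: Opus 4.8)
The statement is local: it concerns a single isolated zero $x_\mathfrak{a}$ of $X$, and it computes the boundary integral of $[X]^*(\Upsilon_1/\V(x_\mathfrak{a}))$ over a small sphere $\partial U_\mathfrak{a}$ around $x_\mathfrak{a}$. The plan is to trivialize everything near $x_\mathfrak{a}$ and reduce the computation to a model situation. First I would choose $U_\mathfrak{a}$ so small that $\mathscr{E}$ is trivial over $U_\mathfrak{a}$, pick a local $g$-orthonormal frame $\{e_i\}$ for $\pi^*\mathscr{E}$ with $e_n=\ell$, and recall that by definition
\[
\Upsilon_1=\frac{(-1)^{n-1}}{2\pi^{n/2}}\frac{\Gamma(n/2)}{(n-1)!}\,\Phi_0,\qquad \Phi_0=\sum\epsilon_{\alpha_1\ldots\alpha_{n-1}}\varpi_{\alpha_1}^n\wedge\cdots\wedge\varpi_{\alpha_{n-1}}^n,
\]
so that $\Phi_0$ is built purely out of the mixed connection forms $\varpi^n_\alpha$ of the modified connection $\nabla$. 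The section $[X]\colon M\setminus\{x_\mathfrak{a}\}\to S\mathscr{E}$ sends $x$ to $(x,[X(x)])$, so $[X]^*\Phi_0$ is a form on $\partial U_\mathfrak{a}\cong\mathbb{S}^{n-1}$.

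The key step is to relate $[X]^*\Phi_0$ to the standard volume form of the fiber sphere $S_{x_\mathfrak{a}}\mathscr{E}$. Here I would invoke Proposition \ref{sp}: restricted to the fiber $S_{x_\mathfrak{a}}\mathscr{E}$, one has $d\nu_{x_\mathfrak{a}}=i_{x_\mathfrak{a}}^*(\varpi^n_1\wedge\cdots\wedge\varpi^n_{n-1})$, which up to the combinatorial factor $(n-1)!$ is $i_{x_\mathfrak{a}}^*\Phi_0$. The point of the modification (Definition \ref{d1}) is precisely that the vertical part of $\nabla\ell$ is an isometry onto the vertical tangent space of $S\mathscr{E}$; consequently the forms $\varpi^n_\alpha$ restrict on each fiber to the solder forms of the round metric. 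I would then argue that, on the boundary sphere, $[X]^*\Phi_0$ differs from the pullback of $(n-1)!\,d\nu_{x_\mathfrak{a}}$ under the fiber-sphere Gauss map $x\mapsto X(x)/F(X(x))$ only by horizontal correction terms involving $dx^A$; but since $\Phi_0$ is an $(n-1)$-form that is the wedge of $(n-1)$ factors each of the form $\gamma^n_{\alpha A}dx^A+(\text{vertical})$, and $[X]^*$ of the vertical parts carries the Gauss map, the integral over $\partial U_\mathfrak{a}$ picks up exactly the degree of that Gauss map. As $U_\mathfrak{a}$ shrinks the horizontal $\gamma$-contributions become negligible relative to the vertical ones because $[X]$ wildly oscillates in the fiber direction near the zero while remaining nearly constant in the base direction.

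Having identified the leading behavior, the computation becomes: $\int_{\partial U_\mathfrak{a}}[X]^*\Phi_0 = (n-1)!\cdot\deg(\text{Gauss map})\cdot\int_{\mathbb{S}^{n-1}}\frac{d\nu_{x_\mathfrak{a}}}{\V(x_\mathfrak{a})}$-normalized appropriately, and $\deg$ of the Gauss map of $X$ along $\partial U_\mathfrak{a}$ is by definition $\mathrm{locdeg}(X;x_\mathfrak{a})$. Plugging into the constant in $\Upsilon_1$, and using $\int_{S_{x_\mathfrak{a}}\mathscr{E}}d\nu_{x_\mathfrak{a}}=\V(x_\mathfrak{a})$ together with $\vol(\mathbb{S}^{n-1})=2\pi^{n/2}/\Gamma(n/2)$, all constants collapse to give the stated value $(-1)^{n-1}\mathrm{locdeg}(X;x_\mathfrak{a})/\vol(\mathbb{S}^{n-1})$, with the sign $(-1)^{n-1}$ coming from the sign in $\Upsilon_1$.

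**Main obstacle.** The delicate point is making the "horizontal corrections are negligible" argument rigorous. One cannot simply drop the $\gamma^n_{\alpha A}dx^A$ terms; instead I expect one must rescale coordinates near $x_\mathfrak{a}$ (replace $x$ by $\varepsilon x$), observe that $[X]$ in the rescaled picture converges to the homogeneous-degree-$0$ map $v\mapsto L(v)/|L(v)|$ determined by the linearization $L=dX_{x_\mathfrak{a}}$, and that in this limit the connection forms of $\nabla$ tend to those of the trivial (flat) metric-compatible connection on the trivial bundle, for which the formula is the classical Poincar\'e--Hopf computation on $\mathbb{S}^{n-1}$. Controlling this convergence — i.e.\ showing the boundary integral is continuous under the rescaling and independent of $\varepsilon$ because $\Phi_0$ restricted to $\partial U_\mathfrak{a}$ is closed modulo exact terms on $S\mathscr{E}\setminus S_{x_\mathfrak{a}}\mathscr{E}$ — is where the real work lies; the cleanest route is probably to show directly that $\Upsilon_1/\V(x_\mathfrak{a})$ restricted to $S_{x_\mathfrak{a}}\mathscr{E}$ represents the generator of $H^{n-1}(S_{x_\mathfrak{a}}\mathscr{E})$ dual to $\vol(\mathbb{S}^{n-1})^{-1}$, and invoke the homotopy invariance of the degree of $[X]|_{\partial U_\mathfrak{a}}\colon\partial U_\mathfrak{a}\to S_{x_\mathfrak{a}}\mathscr{E}$ after the tubular-neighborhood retraction $S\mathscr{E}|_{U_\mathfrak{a}}\simeq S_{x_\mathfrak{a}}\mathscr{E}$.
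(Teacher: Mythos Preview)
Your proposal correctly identifies the two essential ingredients: Proposition~\ref{sp}, which says that $\Phi_0$ restricts on each fibre $S_{x_\mathfrak{a}}\mathscr{E}$ to $(n-1)!\,d\nu_{x_\mathfrak{a}}$, and a degree argument linking $[X]|_{\partial U_\mathfrak{a}}$ to the local degree of $X$ at $x_\mathfrak{a}$. You also diagnose the genuine difficulty accurately: $\Upsilon_1$ is \emph{not} a closed form on $S\mathscr{E}$, so neither a naive homotopy-invariance argument nor the statement that the integral over $\partial U_\mathfrak{a}$ is independent of the choice of $U_\mathfrak{a}$ is immediately available, and the ``horizontal corrections are negligible'' rescaling analysis you sketch is indeed where all the work would lie.

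The paper avoids this obstacle by a device you did not anticipate. Rather than rescaling or using an abstract tubular-neighbourhood retraction, the paper \emph{builds an auxiliary Finsler metric $\widetilde{F}$ on the base manifold $M$}: using the local trivialisation of $\mathscr{E}$ near each $x_\mathfrak{a}$ one transplants the fibre norm $F|_{\mathscr{E}_{x_\mathfrak{a}}}$ to a Minkowski norm on $T_{x_\mathfrak{a}}M$, extends it by a partition of unity, and invokes Proposition~\ref{fIN} (a sum of Minkowski norms is Minkowski) to check the result is a genuine Finsler metric. The neighbourhood $U_\mathfrak{a}$ is then taken to be the forward geodesic ball $B^+_{x_\mathfrak{a}}(\epsilon)$ of $\widetilde{F}$, and the key parametrisation $\kappa\colon S_{x_\mathfrak{a}}\mathscr{E}\to \partial B^+_{x_\mathfrak{a}}(\epsilon)$ is given by $\kappa([y])=\exp_{x_\mathfrak{a}}(\epsilon\,\mathscr{I}_\mathfrak{a}([y]))$, where $\mathscr{I}_\mathfrak{a}$ is the built-in isometry $(\mathscr{E}_{x_\mathfrak{a}},F)\cong(T_{x_\mathfrak{a}}M,\widetilde{F})$. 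Since $(\exp_{x_\mathfrak{a}})_{*0}=\id$, this $\kappa$ is a degree-$1$ diffeomorphism, and one can then invoke the computation already carried out for Finsler \emph{manifolds} in \cite{Sh1} to evaluate $\int_{S_{x_\mathfrak{a}}\mathscr{E}}\varphi_\epsilon^*\Upsilon_1$ via the degree of $\varphi_\epsilon=[X]\circ\kappa$.

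So the comparison is: your route attacks the horizontal-vs-vertical decomposition of $\varpi^n_\alpha$ head-on and would need a careful limit or cohomological argument to finish; the paper instead manufactures a situation in which the boundary sphere $\partial U_\mathfrak{a}$ is \emph{canonically} identified with the fibre sphere $S_{x_\mathfrak{a}}\mathscr{E}$ via Finsler geodesics, reducing the problem to a case already treated in the literature. The paper's trick is shorter once one has Proposition~\ref{fIN}; your route, if completed, would be more self-contained but analytically heavier.
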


\begin{proof}Step 1. We shall construct a Finsler metric $\widetilde{F}$ on $M$ such that $(T_{x_\mathfrak{a}}M,\widetilde{F})$ is isometric to $(\mathscr{E}_{x_\mathfrak{a}},F)$, for all isolate zeros $x_\mathfrak{a}$.

 For each $x_\mathfrak{a}$, there exists a local coordinate system $(V_\mathfrak{a}, u_\mathfrak{a}^i)$ consistent with the orientation of $M$ such that $x_\mathfrak{a}\in V_\mathfrak{a}$ and $V_\mathfrak{a}\cap V_\mathfrak{b}=\emptyset$, if $\mathfrak{a}\neq\mathfrak{b}$. Define a Finsler metric $F_\mathfrak{a}$ on $V_\mathfrak{a}$ by
\[
F_\mathfrak{a}\left(y^i\frac{\partial}{\partial u_\mathfrak{a}^i}\right):=F(y^is_{i\mathfrak{a}}),
\]
where $\{s_{i\mathfrak{a}}\}$ is a local frame field consistent with the orientation of $\mathscr{E}$ on $V_\mathfrak{a}$. Let $W_\mathfrak{a}$ be a relatively compact neighborhood of $x_\mathfrak{a}$ such that $x_\mathfrak{a}\in W_\mathfrak{a}\subset \overline{W}_\mathfrak{a}\subset V_\mathfrak{a}$. By \cite[Lemma 1, p.\,26]{Chern0}, one can construct a smooth nonnegative function $\rho_\mathfrak{a}$ on $M$ with $\rho_\mathfrak{a}|_{V_\mathfrak{a}}\neq 0$ and
\begin{align*}
\rho_\mathfrak{a}(x)&=\left\{
\begin{array}{lll}
&1,&x\in \overline{W}_\mathfrak{a},\\
&0,&x\in M\backslash V_\mathfrak{a}.
\end{array}
\right.
\end{align*}
Choose a cut-off function $\rho_0$ on $M$ such that
\begin{align*}
\rho_0&=\left\{
\begin{array}{lll}
&1,&x\in M\backslash(\cup_\mathfrak{a} \,V_\mathfrak{a}),\\
&0,&x\in \cup_\mathfrak{a} W_\mathfrak{a}.
\end{array}
\right.
\end{align*}
Now set $h_s:=\rho_s/(\rho_0+\sum_\mathfrak{a}\rho_\mathfrak{a})$, $s=0,1,\cdots,k$.
Choose an arbitrary Riemannian metric $g$ on $M$ and define a function $\widetilde{F}$ on $TM$ by
\[
\widetilde{F}:=\sum_{\mathfrak{a}} h_\mathfrak{a} F_\mathfrak{a}+h_0 \sqrt{g}.
\]
Proposition \ref{fIN} then yields that $\widetilde{F}$ is a Finsler metric on $M$. Clearly, $(T_{x_\mathfrak{a}}M,\widetilde{F})$ is isometric to $(\mathscr{E}_{x_\mathfrak{a}},F)$ for all $\mathfrak{a}$.

Step 2. By the Finsler metric $\widetilde{F}$, we can use the argument in \cite[Theorem 0.1]{Sh1}.
Choose a small $\epsilon>0$ such that $B^+_{x_\mathfrak{a}}(\epsilon)\subset V_\mathfrak{a}$ for all $\mathfrak{a}$, where $B^+_{x_\mathfrak{a}}(\epsilon)$ is the forward $\epsilon$-ball defined by $\widetilde{F}$.
Define a map $\varphi_\epsilon: S_{x_\mathfrak{a}}\mathscr{E}\rightarrow S\mathscr{E}$ by
\[
\varphi_\epsilon([y])=[X]\circ\kappa([y]),
\]
where $\kappa([y]):=\exp_{x_\mathfrak{a}}\left(\epsilon \mathscr{I}_\mathfrak{a}([y])\right)$ and $\mathscr{I}_\mathfrak{a}:(\mathscr{E}_{x_\mathfrak{a}}, F)\rightarrow (T_{x_\mathfrak{a}}M,\widetilde{F})$ is the isometry constructed as above. $(\exp_{x_\mathfrak{a}})_{*0}=\text{id}$ implies that $\kappa$ is a diffeomorphism between $S_{x_\mathfrak{a}}\mathscr{E}$ and $\partial B^+_{x_\mathfrak{a}}(\epsilon)$. Hence, $\text{deg}(\kappa)=1$ and therefore,
\[
\text{deg}(\varphi_\epsilon)=\text{deg}([X])\circ\text{deg}(\kappa)=\text{deg}([X])=\text{locdeg}(X;x_\alpha).
\]
Recall that $\vol(\mathbb{S}^{n-1})=2\pi^{n/2}/(\Gamma(n/2))$ and $\Phi_0|_{S_{x_\mathfrak{a}}\mathscr{E}}=(n-1)!d\nu_{x_\mathfrak{a}}$ (see Proposition \ref{sp}). Thus,
\begin{align*}
&\int_{\partial B^+_{{x_\mathfrak{a}}}(\epsilon)}[X]^*\left(\frac{\Upsilon_1}{\V({x_\mathfrak{a}})}\right)=\int_{\kappa(S_{x_\mathfrak{a}}\mathscr{E})}[X]^*\left(\frac{\Upsilon_1}{\V({x_\mathfrak{a}})}\right)
=\text{deg}(\varphi_\epsilon)\int_{S_{x_\mathfrak{a}}\mathscr{E}}\frac{\Upsilon_1}{\V({x_\mathfrak{a}})}\\=&(-1)^{n-1}\frac{\text{locdeg}(X;{x_\mathfrak{a}})}{\vol(\mathbb{S}^{n-1})}.
\end{align*}
We are done by choosing $U_\mathfrak{a}:=B^+_{x_\mathfrak{a}}(\epsilon)$ for all $\mathfrak{a}$.\end{proof}

We recall the following generalized Poincar\'e-Hope theorem \cite{BT}.
\begin{theorem}[generalized Poincar\'e-Hope theorem]\label{PH}
Let $\mathscr{E}$ be an oriented rank $n$ vector bundle over a closed oriented $n$-dimensional
manifold $M$. For any smooth section $X\in \Gamma(\mathscr{E})$ with isolated zeros $\{x_\alpha\}$, we have
\[
\chi(\mathscr{E}):=\int_Me(\mathscr{E})=\underset{\mathfrak{a}}{\sum}\text{locdeg}(X;x_\mathfrak{a}),
\]
where $\chi(\mathscr{E})$ (resp. $e(\mathscr{E})$) is the Euler characteristic (resp. the topological Euler class) of $\mathscr{E}$.
\end{theorem}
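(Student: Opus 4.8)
The plan is to reduce the identity to a purely local computation at the zeros of $X$, representing the Euler class by a Thom form of $\mathscr{E}$ and exploiting homotopy invariance of pullbacks to concentrate that form near the zeros --- the same phenomenon that underlies Lemma~\ref{inter}, carried out here topologically.

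First I would fix a fibre metric on $\mathscr{E}$ and choose, over a locally finite trivializing cover of $M$, a \emph{Thom form} $\Phi$ of $\mathscr{E}$: a closed $n$-form on the total space of $\mathscr{E}$, supported in the open disk bundle $\{v\in\mathscr{E}:|v|<r\}$, whose restriction to each fibre has total integral $1$, and which we may take to be of product form $\mathrm{pr}_2^*\omega_\mathfrak{a}$ over each trivializing chart (see \cite{BGV,BT,MQ}). By the standard description of the topological Euler class, $e(\mathscr{E})=[\sigma^*\Phi]$, where $\sigma\colon M\to\mathscr{E}$ is the zero section. For every $N>0$ the maps $t\mapsto tNX$ give a smooth homotopy $M\to\mathscr{E}$ from $\sigma$ (at $t=0$) to $NX$ (at $t=1$), so $(NX)^*\Phi$ is cohomologous to $\sigma^*\Phi$; since $M$ is closed and oriented,
\[
\chi(\mathscr{E})=\int_M e(\mathscr{E})=\int_M\sigma^*\Phi=\int_M (NX)^*\Phi\qquad\text{for all }N>0 .
\]

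Next I would localize. Choose pairwise disjoint coordinate balls $U_\mathfrak{a}\ni x_\mathfrak{a}$ over which $\mathscr{E}$ is trivialized compatibly with the orientations, and a constant $c>0$ with $|X|\ge c$ on $M\setminus\bigcup_\mathfrak{a}U_\mathfrak{a}$. If $N>r/c$, then $(NX)(M\setminus\bigcup_\mathfrak{a}U_\mathfrak{a})$ is disjoint from $\mathrm{supp}\,\Phi$, so $(NX)^*\Phi$ vanishes off $\bigcup_\mathfrak{a}U_\mathfrak{a}$ and
\[
\chi(\mathscr{E})=\sum_{\mathfrak{a}}\int_{U_\mathfrak{a}}(NX)^*\Phi .
\]
Over $U_\mathfrak{a}$, the chosen trivialization $\mathscr{E}|_{U_\mathfrak{a}}\cong U_\mathfrak{a}\times\mathbb{R}^n$ writes $X$ as a map $X_\mathfrak{a}\colon U_\mathfrak{a}\to\mathbb{R}^n$ vanishing only at $x_\mathfrak{a}$ and $\Phi$ as $\mathrm{pr}_2^*\omega_\mathfrak{a}$ with $\omega_\mathfrak{a}$ a bump $n$-form supported in $\{|v|<r\}\subset\mathbb{R}^n$ of integral $1$. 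Hence $(NX)^*\Phi=(NX_\mathfrak{a})^*\omega_\mathfrak{a}$, and the standard degree formula for the pullback of a compactly supported top form by a map with an isolated preimage of $\mathrm{supp}\,\omega_\mathfrak{a}$ (the degree being unchanged under the scaling $N$) gives
\[
\int_{U_\mathfrak{a}}(NX)^*\Phi=\int_{U_\mathfrak{a}}(NX_\mathfrak{a})^*\omega_\mathfrak{a}=\text{locdeg}(X;x_\mathfrak{a})\int_{\mathbb{R}^n}\omega_\mathfrak{a}=\text{locdeg}(X;x_\mathfrak{a}),
\]
where I use that $\text{locdeg}(X;x_\mathfrak{a})$ is by definition the winding number of $X_\mathfrak{a}/|X_\mathfrak{a}|$ on a small sphere about $x_\mathfrak{a}$. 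Summing over $\mathfrak{a}$ yields $\int_M e(\mathscr{E})=\sum_\mathfrak{a}\text{locdeg}(X;x_\mathfrak{a})$.

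The only non-elementary ingredient is the existence of a Thom form with the stated normalization, together with the identity $e(\mathscr{E})=[\sigma^*\Phi]$, for an oriented rank-$n$ bundle over a closed oriented $n$-manifold with \emph{arbitrary} $n$ --- in particular odd $n$, where $e(\mathscr{E})$ is $2$-torsion and the statement forces $\sum_\mathfrak{a}\text{locdeg}(X;x_\mathfrak{a})=0$. This is exactly the content of the cited \cite{BT} (it can alternatively be read off from a Mathai--Quillen Thom form and the usual transgression), and it is the step where I would expect the genuine work to lie; everything after it is the routine localization and degree bookkeeping above.
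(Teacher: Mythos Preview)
The paper does not prove Theorem~\ref{PH} at all; it is stated with the attribution ``We recall the following generalized Poincar\'e--Hopf theorem \cite{BT}'' and then used as a black box in the proof of Theorem~\ref{Th1}. So there is no proof in the paper to compare against --- you have supplied what the paper outsources to Bott--Tu.

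Your argument is essentially the standard Thom-form proof and is correct in outline. One point deserves tightening: a global Thom form $\Phi$ cannot literally be taken of product form $\mathrm{pr}_2^*\omega_\mathfrak{a}$ on each trivializing chart (the transition data obstructs this). What is true, and sufficient, is that over a contractible $U_\mathfrak{a}$ one has $\Phi|_{U_\mathfrak{a}\times\mathbb{R}^n}=\mathrm{pr}_2^*\omega_\mathfrak{a}+d\eta_\mathfrak{a}$ with $\eta_\mathfrak{a}$ of compact vertical support; since for large $N$ the pullback $(NX)^*\eta_\mathfrak{a}$ is compactly supported in $U_\mathfrak{a}$, the exact term contributes nothing to the integral and your degree computation goes through. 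Alternatively, one can simply arrange the Mathai--Quillen Thom form to be built from a connection that is flat in a neighbourhood of each $x_\mathfrak{a}$, making it genuinely product-form there. With this adjustment the proposal is a complete proof.
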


Now we prove Theorem \ref{Th1}.

\begin{proof}[of Theorem {\rm{1.1}}]
Denote by $\{x_\mathfrak{a}\}$ the isolated zeros of $X$. Let $U_\mathfrak{a}=B^+_{x_\mathfrak{a}}(\epsilon)$ be as in Lemma \ref{inter}. Thus,
(\ref{3.4}) together with Lemma \ref{inter} and Theorem \ref{PH} yields
\begin{align*}
&\int_{M\backslash \cup B^+_{x_\mathfrak{a}}(\epsilon)}[X]^*\left(\frac{\mathbf{\Omega}^D+\mathfrak{E}}{\V(x)}\right)=\int_{M\backslash \cup B^+_{x_\mathfrak{a}}(\epsilon)}[X]^*d\left(\frac{\Upsilon_1}{\V(x)}\right)\\
=&-\underset{\mathfrak{a}}{\sum}\int_{\partial B^+_{x_\mathfrak{a}}(\epsilon)}[X]^*\left(\frac{\Upsilon_1}{\V(x)}\right)= \frac{(-1)^n}{\vol(\mathbb{S}^{n-1})}\underset{\mathfrak{a}}{\sum}\text{locdeg}(X;x_\mathfrak{a})=\frac{(-1)^n}{\vol(\mathbb{S}^{n-1})}\chi(\mathscr{E}).
\end{align*}
Note that $e(\mathscr{E})=0$ if $n=2p+1$ (see \cite[Theorem 8.3.17]{N}). We finish the proof by letting $\epsilon\rightarrow 0^+$.
\end{proof}

\begin{proof}[of Theorem {\rm{\ref{Rvb}}} and Theorem {\rm{\ref{cr}}}]We just prove Theorem \ref{Rvb}. Likewise, one can show Theorem \ref{cr}.
Since $\mathscr{F}$ is a Riemannian bundle, the pull-back connection $\pi^*\mathcal {D}$ is a metric-compatible connection on $\pi^*\mathscr{F}$.
Note that the modified connection of $\pi^*\mathcal {D}$
is exactly itself, which implies that $d\Upsilon_0=0$.
Since $\V(x)=\vol(\mathbb{S}^{n-1})$, $\mathfrak{E}=-d\Upsilon_2$ has no pure-$dy$ part and (therefore) $\int_M[X]^*\mathfrak{E}=0$. Hence, Theorem 1.1 implies
\[
\chi(\mathscr{E})=\int_M[X]^*\mathbf{\Omega}^{\pi^*\mathcal {D}}=\int_M[X]^*\pi^*\mathbf{\Omega}^\mathcal {D}=\int_M\mathbf{\Omega}^\mathcal {D}.
\]
\end{proof}

\begin{remark}\label{last}
In \cite{B}, Bell shows that for any oriented Riemannian bundle $\mathscr{F}$ of even rank over an oriented closed manifold, the geometric Euler class always coincides with the topological Euler class, i.e.,
\[
[\mathbf{\Omega}^\mathcal {D}]=e(\mathscr{F}).\tag{4.1}\label{4.4}
\]
In fact, (\ref{4.4}) holds for any rank (cf. \cite[Theorem 8.3.17]{N}).
Theorem \ref{Rvb} then follows immediately.
(\ref{4.4}) is so beautiful that one might expect to generalize it to the Finsler setting. But it seems impossible.  First, for an oriented Finsler bundle $(\mathscr{E},F,M)$ of rank $n$, the cohomology
class $[\mathbf{\Omega}^D]\in H^n(S\mathscr{E})$ (resp. $[({\mathbf{\Omega}^D+\mathfrak{E}})/{\V(x)}]\in H^n(S\mathscr{E})$) is the geometric Euler class (resp. the modified geometric Euler class) of the pull-back bundle $\pi^*\mathscr{E}$, while $e(\mathscr{E})\in H^n(M)$ is the topological Euler class of the original bundle $\mathscr{E}$; in general, these
classes are not in the same cohomology space. Secondly, $e(\mathscr{E})$ is defined by the pullback of the Thom class via the zero-section, but most of the quantities (especially, the connections and curvatures) of a general Finsler bundle cannot be defined at the zero-section.
Hence, there is no relation between the (modified) geometric Euler class of $\pi^*\mathscr{E}$ and the topological Euler class of $\mathscr{E}$, except for the trivial result
\[
\left[\frac{\mathbf{\Omega}^D+\mathfrak{E}}{\V(x)}\right]=[\mathbf{\Omega}^D]={\pi^*e(\mathscr{E})}=0.
\]
\end{remark}

\section{Appendix}

\begin{proposition}\label{trass}
Let $\nabla$ be the modified connection of a metric-compatible connection $D$. Then
\[
\mathbf{\Omega}^D-\mathbf{\Omega}^\nabla=d\left[\int^1_0\frac{1}{(2\pi)^{\frac{n}{2}}}\mathscr{B}\left(\exp(-\Omega_s)\cdot\frac{\partial{D_s}}{\partial s}\right)ds\right],
\]
where $D_s:=s\nabla+(1-s)D$ and $\Omega_s$ is the curvature of ${D_s}$.
\end{proposition}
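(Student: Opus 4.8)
The plan is to establish this as a standard transgression (Chern--Simons) formula for the Pfaffian, adapted to the Berezin-integral formalism of Section 3. Both $\mathbf{\Omega}^D$ and $\mathbf{\Omega}^\nabla$ are, up to the normalization constant $(2\pi)^{-n/2}$, Berezin integrals of $\exp(-\Omega_D)$ and $\exp(-\Omega_\nabla)$ respectively, where $\Omega_D,\Omega_\nabla\in\mathscr{A}^{2,2}$ are the curvatures viewed as sections of $\wedge^2\pi^*\mathscr{E}$ via the identification $\mathfrak{so}(\pi^*\mathscr{E})\cong\wedge^2\pi^*\mathscr{E}$. Since both $D$ and $\nabla$ are metric-compatible (by the Proposition following Definition \ref{d1}), every connection in the affine path $D_s:=s\nabla+(1-s)D$ is metric-compatible as well, so each $\Omega_s$ again lies in $\mathscr{A}^{2,2}$ and the Berezin integral $\mathscr{B}$ applies throughout.

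First I would note that $\partial D_s/\partial s = \nabla - D$ is a tensorial object, namely a global $1$-form on $S\mathscr{E}$ with values in $\mathfrak{so}(\pi^*\mathscr{E})$ (the difference of two metric connections), hence an element of $\mathscr{A}^{1,2}$; this is what makes $\mathscr{B}(\exp(-\Omega_s)\cdot\partial D_s/\partial s)$ a well-defined global $(n-1)$-form on $S\mathscr{E}$. Next I would compute $\frac{d}{ds}\mathscr{B}(\exp(-\Omega_s))$. Using the structure equation $\Omega_s = d\varpi_s - \varpi_s\wedge\varpi_s$ and $\partial\Omega_s/\partial s = d^{\nabla_s}(\partial\varpi_s/\partial s)$ (the covariant derivative with respect to $D_s$ of the connection-form variation), together with the fact that $\mathscr{B}$ is covariantly constant — i.e. $d\,\mathscr{B}(\alpha) = \mathscr{B}(d^{\nabla_s}\alpha)$ for $\alpha\in\mathscr{A}$, because $D_s$ is metric-compatible and hence preserves the volume element $e_1\wedge\cdots\wedge e_n$ — one obtains the Bianchi-type identity $\frac{d}{ds}\mathscr{B}(\exp(-\Omega_s)) = -d\,\mathscr{B}\!\left(\exp(-\Omega_s)\cdot\partial D_s/\partial s\right)$ after a short manipulation exploiting the graded commutativity on $\mathscr{A}$ and the trace-like cyclicity of $\mathscr{B}$. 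This is exactly the infinitesimal form of the desired identity.

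Finally I would integrate this in $s$ from $0$ to $1$: since $d$ commutes with $\int_0^1 ds$, this gives $\mathscr{B}(\exp(-\Omega_\nabla)) - \mathscr{B}(\exp(-\Omega_D)) = -d\left[\int_0^1\mathscr{B}(\exp(-\Omega_s)\cdot\partial D_s/\partial s)\,ds\right]$; dividing by $(2\pi)^{n/2}$ and rearranging yields the stated formula (the sign works out because $\mathbf{\Omega}^D-\mathbf{\Omega}^\nabla = -(\mathbf{\Omega}^\nabla-\mathbf{\Omega}^D)$).

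The main obstacle is the careful bookkeeping of signs and of the bigraded product in $\mathscr{A}$ when differentiating $\mathscr{B}(\exp(-\Omega_s))$: one must verify that the only surviving term is the exact one, which relies on the identity $d\,\mathscr{B} = \mathscr{B}\circ d^{\nabla_s}$ on all of $\mathscr{A}$ (valid precisely because $D_s$ is metric-compatible), and on the vanishing of the "trace of a commutator" under $\mathscr{B}$. Once these two structural facts are in place — both of which are the Finsler-bundle analogues of the classical Chern--Weil lemmas used in \cite{BGV,MQ,BCS} — the computation is routine. I would therefore organize the proof as: (i) $\partial D_s/\partial s\in\mathscr{A}^{1,2}$ is tensorial and $\Omega_s\in\mathscr{A}^{2,2}$; (ii) $d\,\mathscr{B}(\alpha)=\mathscr{B}(d^{\nabla_s}\alpha)$; (iii) differentiate and identify the exact term; (iv) integrate over $s$.
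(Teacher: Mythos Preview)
Your proposal is correct and follows essentially the same route as the paper's proof: the paper also observes that $D_s$ is a path of metric-compatible connections, records $\partial D_s/\partial s\in\mathscr{A}^{1,2}$ and $\partial\Omega_s/\partial s=D_s(\partial D_s/\partial s)$, then uses the Bianchi identity $D_s\Omega_s=0$ together with $d\circ\mathscr{B}=\mathscr{B}\circ D_s$ to get $\frac{\partial}{\partial s}\mathbf{\Omega}^{D_s}=-\frac{1}{(2\pi)^{n/2}}\,d\,\mathscr{B}\!\left(\exp(-\Omega_s)\cdot\frac{\partial D_s}{\partial s}\right)$ and integrates in $s$. The only difference is that the paper is terser and leaves the two structural facts you single out in (ii) and (iii) implicit.
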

\begin{proof}
Clearly, $\{D_s\}$ is a family of metric-compatible connections.
Set ${D_s}=:d+\omega_s$, where $\omega_s\in \mathscr{A}^1(S\mathscr{E})\otimes \mathfrak{so}(\pi^*\mathscr{E})$. Thus,
\[
\frac{\partial{D_s}}{\partial s}=\frac12\sum\frac{\partial(\omega_s)^j_i}{\partial s}\otimes e_i\wedge e_j\in \mathscr{A}^{1,2},\ \frac{\partial}{\partial s}\Omega_s= {D_s}\left(\frac{\partial {D_s}}{\partial s}\right)\in \mathscr{A}^{2,2}.
\]
Since $D_s\Omega_s=0$, we have
\begin{align*}
\frac{\partial}{\partial s}\mathbf{\Omega}^{{D_s}}&=\frac{1}{(2\pi)^{\frac{n}{2}}}\frac{\partial}{\partial s}\mathscr{B}(\exp(-\Omega_s))=\frac{-1}{(2\pi)^{\frac{n}{2}}}\mathscr{B}\left({D_s}\left(\exp(-\Omega_s)\cdot\frac{\partial{D_s}}{\partial s}\right)\right)\\
&=\frac{-1}{(2\pi)^{\frac{n}{2}}}d\mathscr{B}\left(\exp(-\Omega_s)\cdot\frac{\partial{D_s}}{\partial s}\right).
\end{align*}
\end{proof}

\begin{proposition}\label{fIN}
Let $F_i$, $i=1,2$ be two Minkowski norms on $\mathbb{R}^n$. Then $\widetilde{F}:=F_1+F_2$ is still a Minkowski norm.
\end{proposition}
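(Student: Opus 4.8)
The plan is to verify the three defining conditions of a Minkowski norm for $\widetilde{F}:=F_1+F_2$: positivity and smoothness away from the origin, positive homogeneity of degree one, and strong convexity, i.e. positive-definiteness of the Hessian $\tfrac12[\widetilde F^2]_{y^iy^j}$. The first two are immediate: each $F_i$ is smooth on $\mathbb{R}^n\setminus\{0\}$ and positive there, so the same holds for their sum; and $\widetilde F(\lambda y)=F_1(\lambda y)+F_2(\lambda y)=\lambda F_1(y)+\lambda F_2(y)=\lambda\widetilde F(y)$ for $\lambda>0$. The whole content is in the convexity statement, so I would spend essentially all the effort there.

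For the Hessian, the cleanest route avoids computing $[\widetilde F^2]_{y^iy^j}$ directly and instead uses the following characterization: a positively homogeneous function $F$ smooth and positive off the origin is a Minkowski norm if and only if $F$ itself is such that its square has positive-definite Hessian, which (by a standard fact, e.g.\ \cite[Section 1.2]{BCS}) is equivalent to requiring that $F$ be positive and that the restriction of $F$ to any line not through the origin be strictly convex — equivalently, the indicatrix $\{F=1\}$ is a strictly convex hypersurface, equivalently the unit ball $\{F\le 1\}$ is strictly convex. So first I would record the elementary fact that for a $1$-homogeneous positive function, strong convexity of $F^2$ is equivalent to: $F$ is convex and for every pair of non-proportional vectors $y,z$ one has $F(y+z)<F(y)+F(z)$ (strict triangle inequality). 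Then the result follows because $F_1$ and $F_2$ each satisfy the strict triangle inequality on non-proportional pairs, and convexity is preserved under sums; for a pair $y,z$ that happen to be proportional but distinct directions cannot occur, while genuinely proportional $y,z$ with $z=\mu y$, $\mu>0$, give equality for both $F_i$ and hence for $\widetilde F$, which is fine since those lie on a common ray. Summing, $\widetilde F(y+z)=F_1(y+z)+F_2(y+z)<F_1(y)+F_1(z)+F_2(y)+F_2(z)=\widetilde F(y)+\widetilde F(z)$ for non-proportional $y,z$, which is exactly the strict convexity of the unit ball of $\widetilde F$.

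Alternatively, if one prefers a direct computation: write $h_i:=\tfrac12 F_i^2$, so that the Hessian of $h_i$ is positive-definite by hypothesis, and one wants the Hessian of $h:=\tfrac12\widetilde F^2=\tfrac12(F_1+F_2)^2$. Expanding, $h = h_1+h_2+F_1F_2$, so $\mathrm{Hess}\,h=\mathrm{Hess}\,h_1+\mathrm{Hess}\,h_2+\mathrm{Hess}(F_1F_2)$. Here $\mathrm{Hess}\,h_1$ and $\mathrm{Hess}\,h_2$ are positive-definite, so it suffices to show $\mathrm{Hess}(F_1F_2)$ is positive-semidefinite. Using $\mathrm{Hess}(F_1F_2)=F_2\,\mathrm{Hess}\,F_1+F_1\,\mathrm{Hess}\,F_2+\nabla F_1\otimes\nabla F_2+\nabla F_2\otimes\nabla F_1$, and the facts that each $F_i$ is convex (so $\mathrm{Hess}\,F_i\ge 0$, a consequence of the strong convexity of $F_i^2$ together with $1$-homogeneity) and $F_i>0$, the first two terms are positive-semidefinite; for the cross terms one uses that by Euler's relation $\nabla F_i(y)\cdot y=F_i(y)$ and the Cauchy–Schwarz-type inequality coming from convexity to control $\nabla F_1\otimes\nabla F_2+\nabla F_2\otimes\nabla F_1$ against $F_2\,\mathrm{Hess}\,F_1+F_1\,\mathrm{Hess}\,F_2$. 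I would present the first (geometric, triangle-inequality) argument as the main proof since it is shortest and most transparent.

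The main obstacle is making precise and citing correctly the equivalence "$F^2$ strongly convex $\iff$ $F$ satisfies the strict triangle inequality on non-proportional vectors," including the careful treatment of proportional vectors (where equality holds and no contradiction arises) and the degenerate directions in the Hessian along the radial direction $y$, where $\mathrm{Hess}\,F_i(y)$ always has $y$ in its kernel by homogeneity; one must check that $\mathrm{Hess}\,\widetilde F^2$ is nonetheless positive-definite in that direction, which it is because the $F_i^2$ terms contribute $2F_i(y)^2>0$ there. Beyond that, everything is routine.
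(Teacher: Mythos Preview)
Your preferred ``geometric'' argument has a genuine gap: the equivalence you invoke is false. For a smooth, positive, positively $1$-homogeneous $F$ on $\mathbb{R}^n\setminus\{0\}$, positive-definiteness of $\tfrac12[F^2]_{y^iy^j}$ is a \emph{second-order} condition---equivalently, $\mathrm{Hess}\,F$ is positive semidefinite with kernel exactly $\mathbb{R}y$, i.e.\ the indicatrix has positive-definite second fundamental form. The strict triangle inequality $F(y+z)<F(y)+F(z)$ for non-proportional $y,z$ is merely the \emph{zeroth-order} statement that the unit ball contains no line segments in its boundary. The forward implication holds, but the converse fails: take a smooth norm in $\mathbb{R}^2$ whose indicatrix is strictly convex but has a point of vanishing curvature (e.g.\ locally modeled on $x=1-t^4$). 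Such an $F$ satisfies your strict triangle inequality yet is not a Minkowski norm. So summing the strict triangle inequalities for $F_1$ and $F_2$ proves only that $\widetilde F$ has strictly convex unit ball, not that $[\widetilde F^2]_{y^iy^j}$ is positive definite.

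Your alternative direct computation is on the right track but you stop exactly where the content is: the cross term $2(\nabla F_1\cdot X)(\nabla F_2\cdot X)$ can be negative, and you give no actual argument controlling it. The clean way to organize the computation---and this is what the paper does---is not to split $\tfrac12\widetilde F^2$ as $h_1+h_2+F_1F_2$, but to use the identity $\tilde g_{ij}=\widetilde F\,\widetilde F_{y^iy^j}+\widetilde F_{y^i}\widetilde F_{y^j}$ directly. Since $\widetilde F_{y^i}=(F_1)_{y^i}+(F_2)_{y^i}$ and $(F_s)_{y^i}X^i=g_s(y/F_s,X)$, one gets
\[
\tilde g_{(y)}(X,X)=\bigl[g_1(y/F_1,X)+g_2(y/F_2,X)\bigr]^2+\widetilde F(y)\bigl[(F_1)_{y^iy^j}+(F_2)_{y^iy^j}\bigr]X^iX^j,
\]
which is manifestly a sum of nonnegative terms. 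Equality forces $(F_s)_{y^iy^j}X^iX^j=0$ for $s=1,2$, hence $X=\beta y$; feeding this back into the first bracket gives $\beta(F_1+F_2)=0$, so $\beta=0$. This is the missing computation in your second approach, and it replaces the false equivalence in your first.
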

\begin{proof}
It is easy to check the regularity and the positive homogeneity of $\widetilde{F}$. We just show $\widetilde{F}$ is strictly convex. Let $(y^i)$ denote the coordinates in $\mathbb{R}^n$ and let $g_s$, $s=1,2$ (resp. $\tilde{g}$) denote the fundamental tensor of $F_s$ (resp. $\widetilde{F}$). For any $y\neq 0$ and $X=X^i\frac{\partial}{\partial y^i}$, we have
\begin{align*}
&\tilde{g}_{(y)}(X,X)
=\\
&\left[g_{1(y)}\left(\frac{y}{F_1(y)},X\right)+g_{2(y)}\left(\frac{y}{F_1(y)},X\right) \right]^2+\widetilde{F}(y)\left(\frac{\partial F_1}{\partial y^i\partial y^j}X^iX^j+\frac{\partial F_2}{\partial y^i\partial y^j}X^iX^j\right).
\end{align*}
It follows from \cite[(1.2.9)]{BCS} that $\tilde{g}_{(y)}(X,X)\geq 0$ with equality if and only if
\begin{align*}
\left\{
\begin{array}{lll}
&g_{1(y)}\left(\frac{y}{F_1(y)},X\right)=-g_{2(y)}\left(\frac{y}{F_2(y)},X\right), &(*1)\\
\\
&\frac{\partial F_s}{\partial y^i\partial y^j}X^iX^j=0,\ s=1,2. &(*2)
\end{array}
\right.
\end{align*}
(*2) together with \cite[(1.2.7)-(1.2.9)]{BCS} yields that $X=\beta y$, $\beta\in \mathbb{R}$. Then (*1) implies $\beta F_1(y)=-\beta F_2(y)$, i.e., $\beta=0$.
\end{proof}

\end{document}